\documentclass[reqno,a4paper]{siamart1116}

\setlength{\textwidth}{6.in}
\setlength{\oddsidemargin}{0.1in}
\setlength{\evensidemargin}{0.1in}
\setlength{\textheight}{9.1in}
\usepackage{amsmath,amssymb,epsfig,float,color}

\definecolor{lightblue}{rgb}{0.22,0.45,0.70}
\definecolor{darkred}{rgb}{0.82,0.15,0.20}
\definecolor{darkblue}{rgb}{0.82,0.15,0.12}

\usepackage{booktabs}

\numberwithin{figure}{section}
\numberwithin{table}{section}
\numberwithin{theorem}{section}

\newcommand\ff{\boldsymbol{f}}
\newcommand\bg{\boldsymbol{g}}

\newcommand\beps{\boldsymbol{\varepsilon}}
\newcommand\bbeta{\boldsymbol{\beta}}
\newcommand\bu{\boldsymbol{u}}
\newcommand\bv{\boldsymbol{v}}

\newcommand\cT{\mathcal{T}}

\newcommand\N{\mathbb{N}}
\newcommand\R{\mathbb{R}}

\newcommand{\norm}[1]{\ensuremath{\left\|#1\right\|}}
\newcommand*\nnorm[1]{|\!|\!| #1 |\!|\!|}

\renewcommand\O{\Omega}
\newcommand\G{\Gamma}

\newcommand\bU{\mathbf{U}}
\newcommand\bV{\mathbf{V}}
\newcommand\bW{\mathbf{W}}

\renewcommand\H{\mathrm{H}}
\renewcommand\L{\mathrm{L}}
\newcommand\Q{\mathrm{Q}}

\newcommand\LO{\L^2(\O)}
\newcommand\LOO{\L_{0}^2(\O)}
\newcommand\vdiv{\mathop{\mathrm{div}}\nolimits}

\newcommand\HCUO{\H_{0}^1(\O)}
\newcommand\HsO{\H^s(\O)}
\newcommand\HusO{\H^{1+s}(\O)}

\newcommand\bn{\boldsymbol{n}}
\newcommand\bt{\boldsymbol{t}}

%---------------------------------------

\newcommand{\dx}{\,\mbox{d}x}

\newcommand\curl{\mathop{\mathbf{curl}}\nolimits}
\newcommand\rot{\mathop{\mathbf{curl}}\nolimits}
\newcommand\CE{{\mathcal E}}
\newcommand\CT{{\mathcal T}}

\renewcommand\P{\mathbb P}
\newcommand{\cred}[1]{\textcolor{red}{#1}}
\newcommand\cero{\boldsymbol{0}}

\newcommand\bomega{\boldsymbol{\omega}}
\newcommand\btheta{\boldsymbol{\theta}}
\newcommand\cP{\mathcal{P}}

\allowdisplaybreaks
\newsiamthm{remark}{Remark}

%%%%%%%%%%%%%%%%%%%%%%%%%%%%%%%%%%%%%%%%%%%%%%%%%%%%%%%%%%
\headers{Vorticity formulation 
for the Oseen problem with variable viscosity}{V. Anaya, R. Caraballo, B. G\'omez-Vargas, 
D. Mora, R. Ruiz-Baier}

\title{Velocity-vorticity-pressure formulation 
for the Oseen problem with variable viscosity\thanks{Updated: \today. 
\funding{This work has been partially supported by DIUBB
through projects 2020127 IF/R and 194608 GI/C,
by the National Agency for Research
and Development, ANID-Chile, through project 
{\sc Centro de Modelamiento Matem\'atico} (AFB170001)
of the PIA Program: Concurso Apoyo a Centros Cient\'ificos y Tecnol\'ogicos
de Excelencia con Financiamiento Basal, by the Monash Mathematics Research Fund S05802-3951284, 
and by the HPC-Europa3 Transnational Access programme through grant HPC175QA9K.}}}
\author{V. Anaya\thanks{GIMNAP, Departamento de Matem\'atica,
Universidad del B\'io-B\'io, Concepci\'on, Chile
and CI$^2$MA, Universidad de Concepci\'on, Concepci\'on, Chile (\email{vanaya@ubiobio.cl}).} 
\and R. Caraballo\thanks{GIMNAP, Departamento de Matem\'atica,
Universidad del B\'io-B\'io, Concepci\'on, Chile 
(\email{ruben.caraballo1801@alumnos.ubiobio.cl}).}  
\and  B. G\'omez-Vargas\thanks{Secci\'on de Matem\' atica,
Sede de Occidente, Universidad de Costa Rica, San Ram\'on
de Alajuela, Costa Rica (\email{bryan.gomezvargas@ucr.ac.cr}).}
\and D. Mora\thanks{GIMNAP, Departamento de Matem\'atica,
Universidad del B\'io-B\'io, Concepci\'on, Chile
and CI$^2$MA, Universidad de Concepci\'on, Concepci\'on, Chile  (\email{dmora@ubiobio.cl}).}
\and R. Ruiz-Baier\thanks{School of Mathematical Sciences,
Monash University, 9 Rainforest Walk, Melbourne VIC 3800, Australia; and Universidad Adventista de Chile, Casilla 7-D, Chill\'an, Chile  
(\email{ricardo.ruizbaier@monash.edu}).}
} 
 
\begin{document}
\maketitle  
\begin{abstract} 
We propose and analyse an augmented mixed finite element method for the Oseen equations written in terms of velocity, vorticity, and pressure with non-constant viscosity and homogeneous Dirichlet boundary condition for the velocity. The weak formulation includes  least-squares terms arising from the constitutive equation and from the incompressibility condition, and we show that it satisfies the hypotheses of the Babu\v ska-Brezzi theory. Repeating the arguments of the continuous analysis, the stability and solvability of the discrete problem are established. The method is suited for any Stokes inf-sup stable finite element pair for velocity and pressure, while for vorticity any generic discrete space (of arbitrary order) can be used. A priori and a posteriori error estimates are derived using two specific families of discrete subspaces. Finally, we provide a set of numerical tests illustrating the behaviour of the scheme, verifying the theoretical convergence rates, and showing the performance of the adaptive algorithm guided by residual a posteriori error estimation. 
\end{abstract}

\begin{keywords} Oseen equations, velocity-vorticity-pressure formulation,
mixed finite element methods, variable viscosity, a priori and a posteriori 
error analysis, adaptive mesh refinement. \end{keywords}

\begin{AMS} 65N30, 65N12,  76D07,  65N15. \end{AMS}

%************************************************************************************************
\section{Introduction}
%************************************************************************************************
Using vorticity as additional field in the formulation of incompressible flow 
equations can be advantageous in a number of applicative problems \cite{speziale87}. 
Starting from the seminal works \cite{duan03,DSScmame03} that focused on Stokes equations and where vorticity was sought in $\H(\curl,\Omega)$, 
several different problems including Brinkman, Navier-Stokes, and related flow problems written in terms of vorticity have been studied from the viewpoint of 
numerical analysis of  
finite volume and mixed finite element methods exhibiting diverse properties 
and specific features. Some of these contributions include 
\cite{ACPT07,AGRbCMAME2016,ACT,anaya16,cockburn,bernardi06,SScmame07,SSIMA15}. 

The starting point is the Oseen equations
in the case of variable viscosity, and written 
in terms of velocity $\bu$ and pressure $p$, as follows (see \cite{gr-1986}):
\begin{subequations}
\label{eq:oseen}
\begin{align}\label{eq:momentum0}
\sigma\bu-2\boldsymbol{\vdiv}(\nu\boldsymbol{\varepsilon}(\bu))
+(\boldsymbol{\beta}\cdot\nabla)\bu+\nabla p & = 
\ff & \mbox{ in } \Omega, \\
\vdiv\bu & =  0 & \mbox{ in } \Omega,  \label{eq:mass0}\\ 
\bu & = \boldsymbol{0}&    \mbox{ on } \Gamma,\label{eq:bc0}\\
(p,1)_{0,\Omega}&=0,&\label{eq:bc00}
\end{align}\end{subequations}
where $\sigma>0$ is inversely proportional to the time-step,
$\ff\in\LO^d$ is a force density,
$\boldsymbol{\beta}\in \H^1(\O)^d$ is the convecting velocity 
 field (not necessarily divergence-free),
and $\nu\in W^{1,\infty}(\Omega)$ is the kinematic viscosity of the fluid, 
{satisfying} 
\begin{equation}\label{nubound}
0<\nu_0\le\nu\le\nu_1.
\end{equation}
Such a set of equations will appear, for instance,
in the linearisation of non-Newtonian flow problems,
as well as in applications where viscosity may depend
on temperature, concentration or volume fractions,
or other fields where the fluid flow patterns depend on   
marked spatial distributions of viscosity
\cite{VKN2016,patil82,payne99,rudi17}. 
The specific literature related to the analysis of numerical schemes for
the Oseen equations in terms of vorticity includes the non-conforming
exponentially accurate least-squares spectral method 
proposed in \cite{mohapatra16}, least-squares methods proposed
in \cite{tsai05} for Oseen and Navier-Stokes equations with velocity
boundary conditions,
the family of vorticity-based first-order Oseen-type systems studied in \cite{chang07},
the enhanced accuracy formulation in terms of velocity-vorticity-helicity
investigated in \cite{benzi12}, and the recent mixed (exactly divergence-free)
and DG discretisations for Oseen's problem in velocity-vorticity-pressure
form  {given in} \cite{ABMRRbST2019}. 
However, in most of these references, the derivation of the variational formulations
depends on the viscosity being constant. {This is attributed to the fact that} the usual vorticity-based weak formulation results from exploiting the following identity
\begin{equation}\label{curl-curl}
\curl(\curl\bv)=-\Delta\bv+\nabla(\vdiv\bv),
\end{equation}
applied to the viscous term. 
However for a more general 
friction term of the form $-\boldsymbol{\vdiv}(\nu\boldsymbol{\varepsilon}(\bu))$,
where $\boldsymbol{\varepsilon}(\bu)$
is the strain rate tensor, the decomposition gives other additional terms that 
do not permit the direct recasting of the coupled system as done in the
 cited references above. 

Extensions to cover the case of variable viscosity
do exist in the literature. For instance, \cite{ErnM3AS98}
addresses the well-posedness of the vorticity--velocity
formulation of the Stokes problem with varying density
and viscosity, and the equivalence of the vorticity--velocity
and velocity--pressure formulations in appropriate functional
spaces is proved. More recently, in \cite{anaya22}
we have taken a different approach and employed an 
augmented vorticity-velocity-pressure formulation for Brinkman
equations with variable viscosity. 
{Here} we extend that analysis to the 
generalised Oseen equations with variable viscosity, and address in particular 
how to deal with the additional challenges posed by the
presence of the convective term that did not appear in the Brinkman 
momentum equation. 

{We will employ} the so-called augmented formulations (also known as Galerkin
least-squares methods),  {which} can be regarded as a stabilisation technique
where some terms are added to the variational formulation. Augmented finite 
elements have been considered in several
works with applications in fluid mechanics
(see, e.g., \cite{anaya13,BCGCMAME2017,bochev97,chang90,CCh16,CORbTIMA18,GOVCAMWA2020,pontaza03}  
and the references therein). These methods enjoy appealing advantages  as those described in length in, e.g., \cite{bochev09,bbf-2013}, and reformulations of the set of equations following this approach are also of great importance in the design of block preconditioners (see \cite{benzi06,farrell18} for an application in Oseen and Navier-Stokes equations in primal form, \cite{farrell20} for stress-velocity-pressure formulations for non-Newtonian flows, or \cite{campos18,farrell21} for stress-displacement-pressure mixed formulations for hyperelasticity). 
In the particular context of our mixed formulation for Oseen equations, the augmentation assists us in deriving the Babu\v{s}ka-Brezzi property of ellipticity on the kernel needed for the top-left diagonal block.  

The formulation that we employ is non-symmetric, and the augmentation terms 
appear from least-squares contributions associated with the constitutive equation and 
the incompressibility constraint. The mixed variational formulation is 
shown to be well-posed under a condition on the viscosity bounds (a generalisation of the 
usual condition needed in Oseen equations, (cf. Theorem~\ref{eyu} and Remark~\ref{CFLtipe}).
Then we establish the well-posedness of 
the discrete problem for generic inf-sup stable finite elements (for velocity 
and pressure) in combination with a generic space for vorticity approximation. 
We obtain error estimates for two stable families of finite elements. 
We also derive a reliable and efficient residual-based 
a posteriori error estimator for the mixed problem, 
which can be fully computed locally.
In summary, the advantages of the proposed method
are the possibility to obtain directly
%, by using any generic discrete space (of arbitrary order),
the vorticity field
with optimal accuracy and without the need of postprocessing;
moreover, different from many existing finite element methods
with vorticity field as unknown, the present contribution
supports variable viscosity and no-slip boundary
condition in a natural way.

The contents of the paper have been structured as follows. Functional
spaces and recurrent notation is collected in the remainder of
this section. Section~\ref{sec:model} presents the governing equations
in terms of velocity, vorticity and pressure; 
we state an augmented formulation,
and we perform the solvability analysis invoking 
the Babu\v ska--Brezzi theory. The finite element discretisation
is introduced in Section~\ref{FEM:section}, where we also derive the
stability analysis and optimal error estimates for two families of stable elements.
In Section~\ref{aposte}, we
develop the a posteriori error analysis.
Several numerical tests illustrating the convergence of the proposed
method under different scenarios are reported in Section~\ref{sec:results}.

%-------------------------------------------------
\noindent\textbf{Preliminaries.} 
Let $\O$ be a  bounded domain of $\R^d$, $d=2,3$, with Lipschitz boundary $\G=\partial\O$.
For any $s\geq 0$, the notation $\norm{\cdot}_{s,\O}$ stands
for the norm of the Hilbertian Sobolev spaces $\HsO$ or
$\HsO^d$, with the usual convention $\H^0(\O):=\LO$.

%For $s\geq 0$, we recall 
%the definition of the Hilbert space 
%$$\H^s(\curl;\O):=\set{\btheta\in\HsO^3:\ \curl\btheta\in\HsO^3},$$ endowed with the norm
% $\norm{\btheta}^2_{\H^s(\curl;\O)}
%:=\norm{\btheta}_{s,\O}^2+\norm{\curl\btheta}^2_{s,\O}$, and we will denote 
%$\H(\curl;\O):={\H^0(\curl;\O)}$.

Moreover, $c$ and
$C$, with or without subscripts, tildes, or hats, will represent a generic constant
independent of the mesh parameter $h$, assuming different values
in different occurrences. In addition,
for any vector field $\bv=(v_i)_{i=1}^{3}$
%$\btheta=(\theta_i)_{i=1}^{^{d(d-1)/2}},\ \bv=(v_i)_{i=1}^{d}$
and any scalar field $q$ we recall the notation: 
\[ \vdiv\bv=\sum_{i=1}^{3}\partial_i v_i, \ 
%\btheta\times\bv=\begin{pmatrix}
%\theta_2v_3-\theta_3 v_2\\
%\theta_3v_1-\theta_1v_3\\
%\theta_1v_2-\theta_2v_1\end{pmatrix},\ 
\curl\bv = 
\begin{pmatrix}
\partial_2v_3-\partial_3 v_2\\
\partial_3v_1-\partial_1v_3\\
\partial_1v_2-\partial_2v_1\end{pmatrix}, 
\ \nabla q=\begin{pmatrix} \partial_1q\\ 
\partial_2q\\
\partial_3q
\end{pmatrix},
\]
whereas for  dimension $d=2$, the curl of a vector $\bv$ and a scalar $q$ are 
 scalar function $\partial_{1}v_2-\partial_2v_1$ 
and the vector $\curl q = 
(\partial_2 q,\partial_1 q)^{t}$, respectively.

Recall that, according to \cite[Theorem~2.11]{gr-1986},
for a generic domain $\O\subseteq \R^3$, the relevant
integration by parts formula corresponds to
\begin{equation*}%\label{green}
\int_{\O}\curl\bomega\cdot\bv=\int_{\O}\bomega\cdot\curl\bv+\langle\bomega\times\bn,\bv\rangle_{\Gamma},
\end{equation*}
which in 2D reads as 
\begin{equation}\label{green2D}
\int_{\O}\curl\omega\cdot\bv=\int_{\O}\omega\rot\bv-
\langle\bv\cdot\bt,\omega\rangle_{\Gamma}.
\end{equation}
%with
%$\rot\bv=\partial_{1}v_2-\partial_2v_1$.

%************************************************************************************************
\section{Vorticity-based formulation}\label{sec:model}
%************************************************************************************************
With the aim of {proposing} 
a vorticity-based formulation  for \eqref{eq:oseen}, we consider
the following identities 
\begin{align*}%\label{idfgrtyh}
-2\boldsymbol{\vdiv}(\nu\boldsymbol{\varepsilon}(\bu))&=-2\nu\boldsymbol{\vdiv}(\boldsymbol{\varepsilon}(\bu))
-2\boldsymbol{\varepsilon}(\bu)\nabla\nu =-\nu\Delta\bu-2\boldsymbol{\varepsilon}(\bu)\nabla\nu\\
&=\nu\curl(\curl\bu)-\nu\nabla(\vdiv\bu)-2\boldsymbol{\varepsilon}(\bu)\nabla\nu.
\end{align*}
Therefore, problem \eqref{eq:oseen} rewrites as 
\begin{subequations}\label{eq:weak}
\begin{align}\label{eq:momentum1}
\sigma\bu+\nu\curl\bomega-2\boldsymbol{\varepsilon}(\bu)\nabla\nu+(\boldsymbol{\beta}\cdot\nabla)\bu+\nabla p & = 
\ff & \mbox{ in } \Omega, \\
\bomega-\curl\bu&=\cero & \mbox{ in } \Omega,\label{eq:constitutive1} \\ 
\vdiv\bu & =  0 & \mbox{ in } \Omega,  \label{eq:mass1}\\ 
\bu & = \boldsymbol{0}&    \mbox{ on } \Gamma,\label{eq:bc1}\\
(p,1)_{0,\Omega}&=0,&\label{eq:bc11}
\end{align}\end{subequations}
where we have considered the definition of the vorticity  
and have {applied the incompressibility condition}. The equations 
state, respectively, the momentum conservation, the constitutive 
relation, the mass balance, the no-slip boundary condition, and 
the pressure closure condition. 
%Moreover, we are going to assume the following for the
%viscocity function:
%$\nu\in H^1(\Omega)\cap W^{1,\infty}(\Omega)$, satisfying:
% \begin{equation}\label{eq:rho-hypo-1}
 % \frac{\nabla\nu}{\nu}\in L^\infty(\Omega) \mbox{ and }
%  0<\nu_0<\nu(x)<\nu_1, \mbox{ a.e. in } \Omega,
% \end{equation}
%where $\nu_0$ and $\nu_1$ are positive constants.

%%%%%%%%%%%%%%%%%%%%%%%%%%%%%%%%%%%%
\subsection{Variational formulation for the Oseen equations
with non-constant viscosity}

In this section, we propose a mixed variational formulation
of system \eqref{eq:momentum1}-\eqref{eq:bc11}. First,
we endow the space $\HCUO^d$ with the following norm:
\[\nnorm{\bv}_{1,\O}^2:=\Vert\bv\Vert_{0,\O}^2
+\Vert\curl\bv\Vert_{0,\O}^2+\Vert\vdiv\bv\Vert_{0,\O}^2\cred{,}\]
{and} note that for $\HCUO^d$ the above norm is equivalent
to the usual norm. In particular, we have that there
exists a positive constant $C_{pf}$ such that:
\begin{equation*}%\label{poinc}
\Vert\bv\Vert_{1,\Omega}^2\le C_{pf}(\Vert\curl\bv\Vert_{0,\Omega}^2
+\Vert\vdiv\bv\Vert_{0,\O}^2)\qquad\forall\bv\in\HCUO^d,
\end{equation*}
{where} the above inequality is a consequence of the identity
\begin{equation}\label{iderfgl}
\Vert\nabla\bv\Vert_{0,\O}^2=\Vert\curl\bv\Vert_{0,\Omega}^2
+\Vert\vdiv\bv\Vert_{0,\O}^2,
\end{equation}
which follows from \eqref{curl-curl}
and the Poincar\'e inequality. {Moreover, in order to establish} a weak formulation {for  \eqref{eq:weak}},
we will use the following identity:
\begin{equation}\label{identy}
\curl(\phi\bv)=\nabla\phi\times\bv+\phi\curl\bv,
\end{equation}
valid for any vector field $\bv$
and any scalar field $\phi$.

After testing each equation  of \eqref{eq:momentum1}-\eqref{eq:bc1}
against adequate functions, using \eqref{identy}, and imposing
the boundary conditions, we end up with the following system:
\begin{align*}
\int_{\O}(\sigma\bu+(\bbeta\cdot\nabla)\bu)\cdot\bv-2\int_{\O}\beps(\bu)\nabla\nu\cdot\bv 
+\int_{\O}\nu\bomega\cdot\curl\bv+\int_{\Omega}\bomega\cdot(\nabla\nu\times\bv)
-\int_{\O}p\vdiv\bv&=
\int_{\O}\ff\cdot\bv,\\ 
\int_{\O}\nu\btheta\cdot\curl\bu-\int_{\O}\nu\bomega\cdot\btheta&=\,0,\\
-\int_{\O}q\vdiv\bu&=\,0,
\end{align*}
for all $(\bv,\btheta,q)\in\HCUO^d\times\LO^{d(d-1)/2}\times\LOO$,
where $\LOO\,:=\,\{q\in\LO:\ (q,1)_{0,\Omega}\,=\,0\}$. 

Contrary to what is usually found in the the standard velocity-pressure mixed
formulation, the ellipticity on the kernel condition for
the Babu\v ska-Brezzi theory is not straightforward
in the above mixed formulation. Here is where the augmentation 
contributes to simplify the analysis. We introduce 
the following residual terms arising from equations
\eqref{eq:constitutive1} and \eqref{eq:mass1}:
\[
\kappa_1\int_{\O}(\curl\bu-\bomega)\cdot\curl\bv=0, \qquad
\kappa_2\int_{\O}\vdiv\bu\vdiv\bv=0\qquad\forall\bv\in\HCUO^d,
\]
where $\kappa_1$ and $\kappa_2$ are positive
parameters to be specified later on.

%In particular, the above terms are included in the variational formulation
%to have that the resulting bilinear form is elliptic in $\HCUO^3$.

In this way, we propose the following augmented variational formulation
for \eqref{eq:weak}:

{\em Find $((\bu,\bomega),p)\in(\HCUO^d\times\LO^{d(d-1)/2})\times\LOO$ such that}
\begin{subequations}\label{probform2}
\begin{align}
A((\bu,\bomega),(\bv,\btheta))+B((\bv,\btheta),p)=&\;F(\bv,
\btheta)&\qquad\forall(\bv,\btheta)\in\HCUO^d\times\LO^{d(d-1)/2},\\
B((\bu,\bomega),q)=&\;0&\qquad\forall q\in\LOO,
\end{align}
\end{subequations}
where the bilinear forms and the linear functional are defined by
\begin{subequations}
\begin{align}
\label{defa1} A((\bu,\bomega),(\bv,\btheta))& := \int_{\O}(\sigma\bu+(\bbeta\cdot\nabla)\bu)\cdot\bv
+\int_{\O}\nu\bomega\cdot\btheta+\int_{\O}\nu\bomega\cdot\curl\bv-\int_{\O}\nu\btheta\cdot\curl\bu\nonumber\\
\nonumber &\quad +\kappa_1\int_{\O}\curl\bu\cdot\curl\bv+\kappa_2\int_{\O}\vdiv\bu\vdiv\bv
-\kappa_1\int_{\O}\bomega\cdot\curl\bv\nonumber\\
&\quad-2\int_{\O}\beps(\bu)\nabla\nu\cdot\bv+\int_{\O}\bomega\cdot(\nabla\nu\times\bv),\\
\label{defb1} B((\bv,\btheta),q)& :=-\int_{\O}q\vdiv\bv, \\
\label{funct}
F(\bv,\btheta)&:=\int_{\O}\ff\cdot\bv,
\end{align}
\end{subequations}
for all $(\bu,\bomega),(\bv,\btheta)\in\HCUO^d\times\LO^{d(d-1)/2}$, and
$q\in\LOO$.

As we will address in full detail in the next section,
the  augmented  mixed formulation
will permit us to analyse the problem directly
under the classical Babu\v ska-Brezzi theory~\cite{bbf-2013}.

%\begin{remark}
%The above weak formulation 
%has been written by considering a variable viscosity;
%however, it can be obviously used for constant viscosity.
%\end{remark}

\subsection{Well-posedness analysis}

In this section, we will address the well-posedness
of the proposed weak formulation~\eqref{probform2}.

In our analysis, we  {will need to invoke} the following inequality, which
is a consequence of the Sobolev embedding $\H^1(\O)\hookrightarrow \L^4(\O)$
\begin{equation}\label{SNS}
\left|\int_{\O}\vdiv \bbeta (\bu\cdot \bv) \right|\leq
\widehat{C}\Vert \vdiv \bbeta \Vert_{0,\O}\nnorm{\bu}_{1,\O}\nnorm{\bv}_{1,\O}.
\end{equation}

We will also make use of the following identity
(cf. \cite[Lemma~2.2]{gr-1986})
\begin{equation}\label{idiv}
\int_{\O}[(\bbeta\cdot \nabla)\bu]\cdot \bv+\int_{\O}[(\bbeta\cdot \nabla)\bv]\cdot \bu
=-\int_{\O}\vdiv \bbeta (\bu\cdot \bv).
\end{equation}

The continuity of the bilinear forms and the linear functional
(cf. \eqref{defa1}-\eqref{funct}), will be a consequence of the
following lemma, whose proof follows standard arguments in combination 
with \eqref{nubound}. 

\begin{lemma}\label{bounds}
The following estimates hold
\begin{gather*}
\left \vert \sigma\int_{\O}\bu\cdot\bv\right\vert \le
\sigma\Vert\bu\Vert_{0,\O}\Vert\bv\Vert_{0,\O},\qquad 
\left\vert\int_{\O}\nu\bomega\cdot\btheta\right\vert \le \nu_1\Vert\bomega\Vert_{0,\O}\Vert\btheta\Vert_{0,\O},\\
\left\vert\int_{\O}[(\bbeta\cdot \nabla)\bu]\cdot \bv\right\vert\le
\widehat{C}\nnorm{\bbeta}_{1,\O}\Vert \nabla \bu\Vert_{0,\O}\nnorm{\bv}_{1,\O},\\
\left\vert\int_{\O}\nu\btheta\cdot\curl\bv\right\vert \le \nu_1\Vert\btheta\Vert_{0,\O}\nnorm{\bv}_{1,\O},\qquad 
\left\vert\int_{\O}\beps(\bu)\nabla\nu\cdot\bv\right\vert \le \Vert\nabla\nu\Vert_{\infty,\O}\Vert\beps(\bu)\Vert_{0,\O}\Vert\bv\Vert_{0,\O},\\
\left\vert\int_{\O}\btheta\cdot(\nabla\nu\times\bv)\right\vert \le 2\Vert\nabla\nu\Vert_{\infty,\O}\Vert\bv\Vert_{0,\O}\Vert\btheta\Vert_{0,\O},\qquad 
\vert F(\bv,\btheta)\vert \le \Vert\ff \Vert_{0,\O}\Vert\bv\Vert_{0,\O}.
\end{gather*}
\end{lemma}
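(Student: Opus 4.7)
The plan is to treat each estimate in turn, applying mostly Cauchy--Schwarz combined with the viscosity bounds \eqref{nubound}; only the convective bound needs genuine extra input.

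First I would dispatch the obvious ones. The bound on $\sigma\int_\O \bu\cdot\bv$ is just Cauchy--Schwarz. For $\int_\O \nu\bomega\cdot\btheta$, $\int_\O \nu\btheta\cdot\curl\bv$, $\int_\O\beps(\bu)\nabla\nu\cdot\bv$, and $\int_\O\btheta\cdot(\nabla\nu\times\bv)$, the recipe is identical: pull out the $\L^\infty$ bound on $\nu$ (using $\nu\le\nu_1$) or on $\nabla\nu$ (using $\nu\in W^{1,\infty}(\O)$), then apply Cauchy--Schwarz in $\LO$; for the $\curl\bv$ factor we finally replace $\Vert\curl\bv\Vert_{0,\O}$ by the larger $\nnorm{\bv}_{1,\O}$, as permitted by the definition of the triple-norm. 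The cross-product estimate uses the pointwise bound $|\nabla\nu\times\bv|\le C|\nabla\nu||\bv|$ (with a dimension-dependent constant that accounts for the $2$ in the stated inequality, consistent with the 2D/3D conventions recalled in the preliminaries). Finally, the bound on $F(\bv,\btheta)=\int_\O \ff\cdot\bv$ is again Cauchy--Schwarz.

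The only non-routine step is the convective estimate. The plan is to apply H\"older's inequality with exponents $(4,2,4)$ to write
\[
\left|\int_\O[(\bbeta\cdot\nabla)\bu]\cdot\bv\right|
\le \Vert\bbeta\Vert_{0,4,\O}\,\Vert\nabla\bu\Vert_{0,\O}\,\Vert\bv\Vert_{0,4,\O},
\]
and then invoke the continuous Sobolev embedding $\H^1(\O)\hookrightarrow \L^4(\O)$ (valid for $d=2,3$, which is the regime of \eqref{SNS}) to majorise $\Vert\bbeta\Vert_{0,4,\O}\lesssim\nnorm{\bbeta}_{1,\O}$ and $\Vert\bv\Vert_{0,4,\O}\lesssim\nnorm{\bv}_{1,\O}$; the constants can be absorbed into the generic $\widehat C$ already introduced in \eqref{SNS}. (Here one uses that on $\HCUO^d$ the triple-norm is equivalent to the full $\H^1$-norm, as noted right after the definition of $\nnorm{\cdot}_{1,\O}$.)

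The main obstacle, if any, is simply choosing a consistent set of norms on the right-hand side so that each estimate is stated in a form directly usable in the later continuity arguments for $A$ and $B$; the convective term is the place where one has to be careful to keep $\Vert\nabla\bu\Vert_{0,\O}$ (rather than the full triple-norm of $\bu$) so that later, when integrating by parts via \eqref{idiv} and using \eqref{SNS}, the bookkeeping with $\Vert\vdiv\bbeta\Vert_{0,\O}$ remains consistent. Beyond this, the proof is a straightforward concatenation of Cauchy--Schwarz, H\"older, \eqref{nubound} and the Sobolev embedding, as the statement of the lemma already indicates.
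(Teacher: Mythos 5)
Your proposal is correct and coincides with the paper's (unwritten) argument: the authors merely remark that the lemma ``follows standard arguments in combination with \eqref{nubound}'', and the standard arguments are precisely the Cauchy--Schwarz/H\"older estimates, the bounds $\nu\le\nu_1$ and $\Vert\nabla\nu\Vert_{\infty,\O}$, and the Sobolev embedding $\H^1(\O)\hookrightarrow\L^4(\O)$ for the convective term that you spell out. No discrepancy to report.
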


As a consequence of the above lemma, there
exist  constants $C_1, C_2, C_3 > 0$ such that
\begin{gather*}
\vert A((\bu,\bomega),(\bv,\btheta))\vert \le 
C_{1}\Vert(\bu,\bomega)\Vert\Vert(\bv,\btheta)\Vert,\qquad 
\vert B((\bv,\btheta),q)\vert \le  C_{2}
\Vert(\bv,\btheta)\Vert\Vert q\Vert_{0,\O},\\
\vert F(\bv,\btheta)\vert \le C_{3}\Vert(\bv,\btheta)\Vert,
\end{gather*}
with the product space norm defined as 
\[\Vert(\bv,\btheta)\Vert^2:=\nnorm{\bv}_{1,\O}^2+\Vert\btheta\Vert_{0,\O}^2.\]

The following lemma states  {the ellipticity of the bilinear form $A(\cdot,\cdot)$}.
\begin{lemma}\label{lem-elip}
Assume that
\begin{equation}\label{hipo}
\sigma >\frac{9\Vert\nabla\nu\Vert_{\infty,\O}^2}{\nu_0} \quad \mbox{ and }\quad
 \widehat{C}\Vert \vdiv \bbeta\Vert_{0,\Omega}<\min\left\{\sigma-\frac{9\Vert\nabla\nu\Vert_{\infty,\O}^2}{\nu_0},\dfrac{\nu_0}{12}\right\}.
\end{equation}
Then, if we choose $\kappa_1=\frac{2}{3}\nu_0$ and $\kappa_2>\dfrac{\nu_0}{3}$,
there exists a constant $\alpha>0$ such that
\[A((\bv,\btheta),(\bv,\btheta))\ge
\alpha\Vert(\bv,\btheta)\Vert^2\qquad\forall(\bv,
\btheta)\in\HCUO^d\times\LO^{d(d-1)/2}.\]
%donde $A$ es la forma bilineal dada en \eqref{formasoseen}.
\end{lemma}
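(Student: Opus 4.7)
The approach is to test $A$ on the diagonal $(\bu,\bomega)=(\bv,\btheta)$, exploit the cancellation of the skew-symmetric pair $\pm\int_\O\nu\btheta\cdot\curl\bv$ present in \eqref{defa1}, and then absorb the three remaining indefinite contributions
\[
-\kappa_1\!\int_\O\btheta\cdot\curl\bv,\qquad -2\!\int_\O\beps(\bv)\nabla\nu\cdot\bv,\qquad \int_\O\btheta\cdot(\nabla\nu\times\bv)
\]
into the four nonnegative quadratics $\sigma\|\bv\|_{0,\O}^2$, $\nu_0\|\btheta\|_{0,\O}^2$, $\kappa_1\|\curl\bv\|_{0,\O}^2$, and $\kappa_2\|\vdiv\bv\|_{0,\O}^2$ that survive the cancellation.

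First I would handle the convective term by applying identity \eqref{idiv} with $\bu=\bv$ to rewrite $\int_\O(\bbeta\cdot\nabla)\bv\cdot\bv$ as $-\tfrac12\int_\O\vdiv\bbeta\,|\bv|^2$ and bound it via \eqref{SNS}, distributing the resulting penalty proportional to $\widehat C\|\vdiv\bbeta\|_{0,\O}\nnorm{\bv}_{1,\O}^2$ across $\|\bv\|_{0,\O}^2$, $\|\curl\bv\|_{0,\O}^2$, and $\|\vdiv\bv\|_{0,\O}^2$. For the three indefinite terms I would then apply Young's inequality $2ab\le\eta a^2+b^2/\eta$ with three positive parameters $\eta_1,\eta_2,\eta_3$, together with Lemma~\ref{bounds}, the bound $\|\beps(\bv)\|_{0,\O}\le\|\nabla\bv\|_{0,\O}$, and identity \eqref{iderfgl}, so that each indefinite term is split into controlled fractions of $\|\btheta\|_{0,\O}^2$, $\|\curl\bv\|_{0,\O}^2$, $\|\vdiv\bv\|_{0,\O}^2$, and $\|\bv\|_{0,\O}^2$ that are subtracted from the four positive quadratics.

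The main obstacle, and the origin of the specific constants in \eqref{hipo}, is the simultaneous balancing of $\eta_1,\eta_2,\eta_3$: the four resulting coefficients must all remain strictly positive. With the prescribed $\kappa_1=\tfrac{2\nu_0}{3}$, I would choose $\eta_1=1$, so that the bound for $-\kappa_1\int_\O\btheta\cdot\curl\bv$ contributes exactly $\tfrac{\nu_0}{3}$ to each of the $\|\btheta\|_{0,\O}^2$ and $\|\curl\bv\|_{0,\O}^2$ coefficients, leaving $\tfrac{2\nu_0}{3}$ in $\nu_0\|\btheta\|_{0,\O}^2$ and $\tfrac{\nu_0}{3}$ in $\kappa_1\|\curl\bv\|_{0,\O}^2$ available for the other absorptions. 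The hypothesis $\kappa_2>\tfrac{\nu_0}{3}$ together with the bound $\widehat C\|\vdiv\bbeta\|<\tfrac{\nu_0}{12}$ in \eqref{hipo} reserves the margins needed to keep the coefficients of $\|\curl\bv\|_{0,\O}^2$ and $\|\vdiv\bv\|_{0,\O}^2$ positive under the choice $\eta_2$ just below $\tfrac{\nu_0}{3}$, while $\eta_3$ just below $\tfrac{2\nu_0}{3}$ handles $\|\btheta\|_{0,\O}^2$. The first half of \eqref{hipo}, namely $\sigma>\tfrac{9\|\nabla\nu\|_{\infty,\O}^2}{\nu_0}+\widehat C\|\vdiv\bbeta\|$, is then precisely what makes the remaining $\|\bv\|_{0,\O}^2$ coefficient $\sigma-\|\nabla\nu\|_{\infty,\O}^2(1/\eta_2+1/\eta_3)-\tfrac{\widehat C}{2}\|\vdiv\bbeta\|$ strictly positive, accommodating the worst-case admissible value of $1/\eta_2+1/\eta_3$ compatible with the constraints above; this is the source of the constant $9$. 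Taking $\alpha$ to be the minimum of the four positive coefficients and invoking the definition $\nnorm{\bv}_{1,\O}^2=\|\bv\|_{0,\O}^2+\|\curl\bv\|_{0,\O}^2+\|\vdiv\bv\|_{0,\O}^2$ delivers the claimed ellipticity bound.
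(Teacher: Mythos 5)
Your proposal follows essentially the same route as the paper's proof: diagonal testing with cancellation of the skew pair $\pm\int_\O\nu\btheta\cdot\curl\bv$, the identities \eqref{idiv} and \eqref{SNS} for the convective term, and Young's inequality combined with \eqref{iderfgl} to absorb the three indefinite terms into the coefficients of $\Vert\bv\Vert_{0,\O}^2$, $\Vert\btheta\Vert_{0,\O}^2$, $\Vert\curl\bv\Vert_{0,\O}^2$ and $\Vert\vdiv\bv\Vert_{0,\O}^2$, with the same reading of where each hypothesis in \eqref{hipo} is used. The one imprecision is in your stated Young parameters ($\eta_2$ near $\nu_0/3$, $\eta_3$ near $2\nu_0/3$), which as written would not reproduce the constant $9$; the paper's actual weights put $\nu_0/6$ on $\Vert\nabla\bv\Vert_{0,\O}^2$ for the $\beps(\bv)\nabla\nu$ term and $\nu_0/3$ on $\Vert\btheta\Vert_{0,\O}^2$ for the $\nabla\nu\times\bv$ term, giving the reciprocal contributions $6\Vert\nabla\nu\Vert_{\infty,\O}^2/\nu_0+3\Vert\nabla\nu\Vert_{\infty,\O}^2/\nu_0$ and hence the $9$.
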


\begin{proof}
Let $(\bv,\btheta)\in\HCUO^d\times\LO^{d(d-1)/2}$.
As a consequence of Lemma~\ref{bounds}, we have that
\begin{align}\label{de1}
\nonumber \left\vert2\int_{\O}\beps(\bv)\nabla\nu\cdot\bv\right\vert
&\le 2\Vert \nabla \nu \Vert_{\infty,\O}\left(\dfrac{\nu_0}{12\Vert \nabla \nu \Vert_{\infty,\O}}\Vert \nabla \bv\Vert_{0,\O}^2+\dfrac{3\Vert \nabla \nu\Vert_{\infty,\O}}{\nu_0}\Vert \bv\Vert_{0,\O}^2\right)\\
 &=\dfrac{\nu_0}{6}(\Vert \curl \bv\Vert_{0,\O}^2+\Vert \vdiv \bv\Vert_{0,\O}^2)+\dfrac{6\Vert \nabla \nu\Vert_{\infty,\O}^2}{\nu_0}\Vert \bv\Vert_{0,\O}^2,
\end{align}
where we have used \eqref{iderfgl}. Moreover, using that
$\Vert(\nabla\nu\times\bv)\Vert_{0,\Omega}\le
2\Vert\nabla\nu\Vert_{\infty,\Omega}\Vert\bv\Vert_{0,\Omega}$, we get
\begin{align}%\label{de2}
\nonumber \left\vert\int_{\O}\btheta\cdot(\nabla\nu\times\bv)\right\vert
&\le 2\Vert \nabla \nu\Vert_{\infty,\O}\left( \dfrac{\nu_0}{6\Vert \nabla \nu \Vert_{\infty,\O}}\Vert \btheta\Vert_{0,\O}^2+\dfrac{3\Vert \nabla \nu \Vert_{\infty,\O}}{2\nu_0}\Vert \bv \Vert_{0,\O}^2 \right)\\ 
\nonumber &=\dfrac{\nu_0}{3}\Vert \btheta\Vert_{0,\O}^2+\dfrac{3\Vert \nabla \nu \Vert_{\infty,\O}^2}{\nu_0}\Vert \bv \Vert_{0,\O}^2,\\
\nonumber \left\vert\kappa_1\int_{\O}\btheta\cdot\curl\bv\right\vert
&\le \kappa_1\left( \dfrac{\nu_0}{3\kappa_1}\Vert \btheta \Vert_{0,\O}^2+\dfrac{3\kappa_1}{4\nu_0}\Vert \curl \bv \Vert_{0,\O}^2\right)\\ \label{de3}
 &=\dfrac{\nu_0}{3}\Vert \btheta \Vert_{0,\O}^2+\dfrac{3\kappa_1^2}{4\nu_0}\Vert \curl \bv \Vert_{0,\O}^2.
\end{align}
Thus, using the Cauchy-Schwarz inequality,
\eqref{de1}-\eqref{de3}, \eqref{idiv} and \eqref{SNS}, we obtain
\begin{align*}
A((\bv,\btheta),(\bv,\btheta))\geq & \sigma\Vert \bv\Vert_{0,\O}^2+\int_{\O}[(\bbeta\cdot \nabla)\bv]\cdot \bv+\int_{\O}\nu \vert \btheta \vert^2+\kappa_1\Vert \curl \bv\Vert_{0,\O}^2+\kappa_2\Vert \vdiv \bv\Vert_{0,\O}^2\\
&-\kappa_1\int_{\O}\btheta \cdot \curl \bv-2\int_{\O}\beps(\bv)\nabla \nu\cdot \bv+\int_{\O}\btheta\cdot (\nabla \nu\times \bv)\\
\geq & \sigma\Vert \bv\Vert_{0,\O}^2-\widehat{C}\Vert \vdiv \bbeta\Vert_{0,\Omega}{\nnorm{\bv}_{1,\O}^2}+\nu_0\Vert \btheta\Vert_{0,\O}^2+\kappa_1\Vert \curl \bv\Vert_{0,\O}^2+\kappa_{2}\Vert \vdiv \bv\Vert_{0,\O}^2\\
&-\dfrac{\nu_0}{3}\Vert \btheta\Vert_{0,\O}^2-\dfrac{3\kappa_1^2}{4\nu_0}\Vert \curl \bv\Vert_{0,\O}^2-\dfrac{\nu_0}{6}(\Vert \curl \bv\Vert_{0,\O}^2+\Vert \vdiv \bv \Vert_{0,\O}^2)\\
&-\dfrac{6\Vert \nabla \nu \Vert_{\infty,\O}^2}{\nu_0}\Vert \bv \Vert_{0,\O}^2-\dfrac{\nu_0}{3}\Vert \btheta \Vert_{0,\O}^2-\dfrac{3\Vert \nabla \nu\Vert_{\infty,\O}^2}{\nu_0}\Vert \bv \Vert_{0,\O}^2\\
%=&\dfrac{\nu_0}{3}\Vert \btheta \Vert_{0,\O}^2+\left(\kappa_1-\dfrac{3\kappa_1^2}{4\nu_0}-\dfrac{\nu_0}{6}-\widehat{C}\Vert \vdiv \bbeta\Vert_{0,\Omega}\right)\Vert \curl \bv \Vert_{0,\O}^2\\
%&+\left(\kappa_2-\dfrac{\nu_0}{6}-\widehat{C}\Vert \vdiv \bbeta\Vert_{0,\Omega}\right)\Vert \vdiv \bv\Vert_{0,\O}^2\\
%&+(\sigma-\dfrac{9\Vert \nabla \nu \Vert_{\infty,\O}^2}{\nu_0}-\widehat{C}\Vert \vdiv \bbeta\Vert_{0,\Omega})\Vert \bv \Vert_{0,\O}^2\\
=&\dfrac{\nu_0}{3}\Vert \btheta \Vert_{0,\O}^2+\left(\dfrac{\nu_0}{6}-\widehat{C}\Vert \vdiv \bbeta\Vert_{0,\Omega}\right)\Vert\curl \bv \Vert_{0,\O}^2\\
&+\left(\kappa_2-\dfrac{\nu_0}{6}-\widehat{C}\Vert \vdiv \bbeta\Vert_{0,\Omega}\right)\Vert \vdiv \bv\Vert_{0,\O}^2\\
&+\left(\sigma-\dfrac{9\Vert \nabla \nu \Vert_{\infty,\O}^2}{\nu_0}-\widehat{C}\Vert \vdiv \bbeta\Vert_{0,\Omega}\right)\Vert \bv \Vert_{0,\O}^2.
\end{align*}
Now, using assumption \eqref{hipo}, we have
\begin{equation*}
A((\bv,\btheta),(\bv,\btheta))\geq \alpha\Vert(\bv,\btheta)\Vert^2,
\end{equation*}
%
%\begin{align*}
%a((\bv,\btheta),(\bv,\btheta))\geq & \min \left\{\dfrac{\nu_0}{6}-\widehat{C}\Vert \vdiv \bbeta%\Vert_{0,\Omega},\kappa_2-\dfrac{\nu_0}{6}-\widehat{C}\Vert \vdiv \bbeta\Vert_{0,\Omega},\right. \\
%& \qquad \qquad \qquad \left.\sigma-\dfrac{9\Vert \nabla\nu \Vert_{0,\O}^2}{\nu_0} -\widehat{C}\Vert %\vdiv \bbeta\Vert_{0,\Omega}\right\}\Vert(\bv,\btheta)\Vert_{\HCUO^d\times\LO^{d(d-1)/2}}^2,
%\end{align*}
where
\[
\alpha:= \min \left\{{\dfrac{\nu_0}{3}},\dfrac{\nu_0}{6}-\widehat{C}\Vert \vdiv \bbeta\Vert_{0,\Omega},\kappa_2-\dfrac{\nu_0}{6}-\widehat{C}\Vert \vdiv \bbeta\Vert_{0,\Omega},\sigma-\dfrac{9\Vert \nabla\nu \Vert_{0,\O}^2}{\nu_0} -\widehat{C}\Vert \vdiv \bbeta\Vert_{0,\Omega}\right\},
\]
which is clearly positive according to \eqref{hipo} and the assumptions on $\kappa_1$
and $\kappa_2$.
\end{proof}

Now we recall the following result related to the inf-sup condition: There exists $C>0$, depending only
on $\O$, such that (cf. \cite{G2014})
\begin{equation*}%\label{inf-sup-classical}
\sup_{0\ne\bv\in\HCUO^d}\frac{\left \vert \displaystyle\int_{\O}
q\vdiv\bv\right\vert}{\Vert\bv\Vert_{1,\O}}\ge C\Vert
  q\Vert_{0,\O}\quad\forall q\in\LOO.
\end{equation*}
As a consequence, we immediately have the following lemma.
\begin{lemma}\label{inf-sup-cont}
There exists $\gamma>0$, independent of $\nu$, such that
\begin{equation*}%\label{eq:inf-sup-b2}
\sup_{0\ne(\bv,\btheta)\in\HCUO^d\times\LO^{d(d-1)/2}}\frac{\vert
  B((\bv,\btheta),q)\vert}{\Vert(\bv,\btheta)\Vert}\ge \gamma\Vert
  q\Vert_{0,\O}\quad\forall q\in\LOO.
\end{equation*}
\end{lemma}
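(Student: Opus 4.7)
The plan is a direct reduction to the classical divergence inf-sup condition recalled immediately above the statement. I would first observe that $B((\bv,\btheta),q)=-\int_{\O}q\vdiv\bv$ depends neither on the vorticity component $\btheta$ nor on the viscosity $\nu$, so the full supremum can be lower-bounded by restricting to test pairs of the form $(\bv,\0)$:
\[
\sup_{(\bv,\btheta)\neq 0}\frac{|B((\bv,\btheta),q)|}{\Vert(\bv,\btheta)\Vert}
\;\ge\;\sup_{\bv\neq\0}\frac{\left|\int_{\O}q\vdiv\bv\right|}{\Vert(\bv,\0)\Vert}.
\]
From the definition $\Vert(\bv,\btheta)\Vert^2=\nnorm{\bv}_{1,\O}^2+\Vert\btheta\Vert_{0,\O}^2$, the denominator on the right-hand side collapses to exactly $\nnorm{\bv}_{1,\O}$.

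The next step would be to compare the triple-bar norm with the usual $\H^1$ norm on $\HCUO^d$. Invoking the identity \eqref{iderfgl} and the Poincar\'e inequality, I would show
\[
\nnorm{\bv}_{1,\O}^2=\Vert\bv\Vert_{0,\O}^2+\Vert\nabla\bv\Vert_{0,\O}^2=\Vert\bv\Vert_{1,\O}^2,
\]
so the two norms actually coincide on $\HCUO^d$. Substituting this identification into the previous display and applying the standard inf-sup condition for the divergence (with constant $C>0$ depending only on $\O$) quoted just above the statement, the desired estimate follows immediately with $\gamma:=C$.

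I do not expect any substantive obstacle in this argument; the only point worth emphasising is the $\nu$-independence of $\gamma$, which is automatic since none of $B$, $\nnorm{\cdot}_{1,\O}$, or $\Vert\cdot\Vert$ involves the viscosity. The whole proof is essentially a one-line reduction combined with the norm equivalence coming from \eqref{iderfgl}.
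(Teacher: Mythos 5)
Your proposal is correct and follows essentially the same route as the paper, which likewise deduces the lemma immediately from the classical divergence inf--sup condition by testing with pairs $(\bv,\0)$; the identification $\nnorm{\bv}_{1,\O}=\Vert\bv\Vert_{1,\O}$ on $\HCUO^d$ via \eqref{iderfgl} is exactly the observation that makes the reduction work, and the $\nu$-independence of $\gamma$ is automatic as you note. The only cosmetic remark is that the Poincar\'e inequality is not actually needed for the norm identity itself (only \eqref{iderfgl} is), but this does not affect the argument.
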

%\begin{proof}
%El resultado es consecuencia de \eqref{inf-sup-classical}
%y el hecho de que
%\begin{align*}
%\sup_{0\ne(\bv,\btheta)\in\HCUO^d\times\LO^{d(d-1)/2}}&\frac{\vert
%  B((\bv,\btheta),q)\vert}{\Vert(\bv,\btheta)\Vert_{\HCUO^d\times\LO^{d(d-1)/2}}}\ge\\
%  & \qquad  \sup_{0\ne(\bv,{\bf 0})\in\HCUO^d\times\LO^{d(d-1)/2}}\frac{\vert
%  B((\bv,\btheta),q)\vert}{\Vert(\bv,\btheta)\Vert_{\HCUO^d\times\LO^{d(d-1)/2}}}\\
%  & \qquad  \qquad  \qquad =\sup_{0\ne\bv\in\HCUO^d}\frac{\left\vert\displaystyle\int_{\O}
%q\vdiv\bv\right\vert}{\Vert\bv\Vert_{1,\O}}
%\end{align*}
%\end{proof}

We state the well-posedness of problem \eqref{probform2}
in the next theorem.

\begin{theorem}\label{eyu}
Assume that the hypotheses of Lemma~\ref{lem-elip} hold true.
Then, there exists a unique solution
$((\bu,\bomega),p)\in(\HCUO^d\times\LO^{d(d-1)/2})\times\LOO$
to problem \eqref{probform2}. Moreover, there exists $C>0$
such that
\[\Vert(\bu,\bomega)\Vert+\Vert p\Vert_{0,\O}\le
C\Vert\ff\Vert_{0,\O}.\]
\end{theorem}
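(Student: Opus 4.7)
My plan is to deduce Theorem~\ref{eyu} as a direct application of the classical Babu\v{s}ka-Brezzi theorem for saddle-point problems, since every ingredient needed has already been assembled in the preceding lemmas. Problem \eqref{probform2} has exactly the standard mixed structure: a bilinear form $A$ on the product test space $\HCUO^d \times \LO^{d(d-1)/2}$, a constraint bilinear form $B$ coupling that space to $\LOO$, and a load functional $F$.

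First I would verify the three structural hypotheses. Continuity of $A$, $B$, and $F$, with constants $C_1,C_2,C_3$, was already recorded immediately after Lemma~\ref{bounds} by summing the individual bounds and applying the Cauchy-Schwarz inequality on the product norm $\Vert(\bv,\btheta)\Vert$. Ellipticity of $A$ on the whole space $\HCUO^d\times \LO^{d(d-1)/2}$ (hence \emph{a fortiori} on $\mathrm{Ker}(B)$) is furnished by Lemma~\ref{lem-elip} under assumption \eqref{hipo} and the choice $\kappa_1=\tfrac{2}{3}\nu_0$, $\kappa_2>\tfrac{\nu_0}{3}$, yielding the positive constant $\alpha$. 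The inf-sup condition for $B$ with a positive constant $\gamma$ is Lemma~\ref{inf-sup-cont}.

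With these three conditions in place, the Babu\v{s}ka-Brezzi theorem \cite{bbf-2013} delivers existence and uniqueness of $((\bu,\bomega),p)$ together with a stability estimate of the form
\begin{equation*}
\Vert(\bu,\bomega)\Vert + \Vert p\Vert_{0,\Omega} \le C \Vert F\Vert_{*},
\end{equation*}
where $C$ depends only on $\alpha,\gamma,C_1,C_2$ and $\Vert F\Vert_{*}$ denotes the operator norm of $F$ on $\HCUO^d\times \LO^{d(d-1)/2}$. The final estimate of Lemma~\ref{bounds} together with $\Vert \bv\Vert_{0,\Omega}\le \Vert(\bv,\btheta)\Vert$ immediately yields $\Vert F\Vert_{*}\le \Vert \ff\Vert_{0,\Omega}$, which produces the bound stated in the theorem.

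I expect no substantive obstacle to remain: the real analytical work was carried out in establishing Lemma~\ref{lem-elip}, where the ellipticity constant $\alpha$ was shown to remain positive thanks to the smallness constraints in \eqref{hipo} that counteract the variable-viscosity and convective cross terms. The only point requiring care is the bookkeeping of how the final constant $C$ depends on the physical data $\sigma$, $\nu_0$, $\nu_1$, $\Vert\nabla\nu\Vert_{\infty,\Omega}$, and $\Vert\vdiv\bbeta\Vert_{0,\Omega}$ through $\alpha$; in particular $C$ will deteriorate as \eqref{hipo} becomes tight, which is consistent with the CFL-type restriction highlighted in Remark~\ref{CFLtipe}.
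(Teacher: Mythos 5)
Your proposal is correct and follows essentially the same route as the paper: the authors likewise invoke the Babu\v{s}ka--Brezzi theorem, citing the ellipticity of $A$ from Lemma~\ref{lem-elip}, the inf-sup condition of Lemma~\ref{inf-sup-cont}, and the continuity bounds derived from Lemma~\ref{bounds}. Your additional remarks on bounding $F$ via $\Vert\ff\Vert_{0,\Omega}$ and on the dependence of the stability constant on the data are consistent with, though more explicit than, the paper's one-line proof.
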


\begin{proof}
The proof follows from Lemmas~\ref{lem-elip} and
\ref{inf-sup-cont}, and a direct consequence of the Babu\v ska-Brezzi
Theorem (\cite[Theorem~II.1.1]{bbf-2013}).
\end{proof}

\begin{remark}\label{uniquesolu}
The unique solution of problem~\eqref{probform2} also solves \eqref{eq:momentum1}-\eqref{eq:bc11}.
The equivalence follows essentially from applying integration by parts backwardly in
\eqref{probform2} and using suitable test functions. 
This  is employed in Section~\ref{aposte}
to prove the efficiency of the a posteriori error estimator.
\end{remark}

\begin{remark}\label{CFLtipe}
If 
the convective velocity  $\bbeta \in \H^1(\O)^d$ is solenoidal 
(i.e., $\vdiv\bbeta=0$ in $\Omega$), 
then problem~\eqref{probform2} is well-posed after choosing $\kappa_1=\frac{2}{3}\nu_0$, $\kappa_2>\dfrac{\nu_0}{3}$, and assuming 
\begin{equation}\label{hipo2}
\sigma\nu_0 >{9\Vert\nabla\nu\Vert_{\infty,\O}^2}.
\end{equation}
\end{remark}

%\begin{remark}
%Assumptions \eqref{hipo} or \eqref{hipo2} hold provided
%one selects $\sigma$ appropriately. As this pa\-ra\-me\-ter
%represents the inverse of the timestep (whenever Oseen is 
%derived as a time discretisation of Navier-Stokes), the aforementioned
%relation can be regarded as a CFL-type condition.
%\cblue{aca el referee dijo algo sobre CFL a nivel discreto?}
%\end{remark}

\section{Numerical discretisation}
\label{FEM:section}
%In this section, we will construct a finite element scheme associated {with} \eqref{probform2}, we will consider generic finite element subspaces yielding the unique solvability of the discrete scheme. We will also derive the a priori error estimates, {and provide the corresponding rates of convergence for the different methods covered by our analysis.}
%
Let $\{\cT_{h}(\O)\}_{h>0}$ be a shape-regular
family of partitions of the polygonal/polyhedral region
$\bar\O$, by triangles/tetrahedrons $T$ of diameter $h_T$,
with the meshsize defined as $h:=\max\{h_T:\, T\in\cT_{h}(\O)\}$.
In what follows, given an integer $k\ge0$ and a subset
$S$ of $\R^d$, $\mathbb{P}_k(S)$ denotes the space of polynomial functions {defined on $S$ and being of degree $\leq$ $k$}.

Now, we consider generic finite dimensional subspaces
$\bV_h\subseteq\HCUO^d$, $\bW_h\subseteq\LO^{d(d-1)/2}$
and $Q_h\subseteq\LOO$ such that the following 
discrete inf-sup holds
\begin{equation}\label{inf-sup-d}
\sup_{0\ne(\bv_h,\btheta_h)\in \bV_h\times \bW_h}\frac{\vert
B((\bv_h,\btheta_h),q_h)\vert}{\Vert(\bv_h,\btheta_h)\Vert}
\ge \gamma_0\Vert q_h\Vert_{0,\O}\quad\forall q_h\in Q_h,
\end{equation}
where $\gamma_0>0$ is independent of $h$.

In this way, the above inf-sup condition can be obtained
if ($\bV_h,Q_h$) is an inf-sup stable pair for
the classical Stokes problem. Moreover,
the discrete space $\bW_h\subseteq \LO^{d(d-1)/2}$
for the vorticity can be taken as continuous
or discontinuous polynomial space. Here 
we will consider both options.

Now, we are in a position to introduce the finite element scheme
related to problem~\eqref{probform2}: 
Find $((\bu_h,\bomega_h),p_h)\in(\bV_h\times \bW_h)\times Q_h$ such that
\begin{equation}\label{probform2d}
\begin{split}
A((\bu_h,\bomega_h),(\bv_h,\btheta_h))+B((\bv_h,\btheta_h),p_h)=&\;F(\bv_h,
\btheta_h)\qquad\forall(\bv_h,\btheta_h)\in \bV_h\times \bW_h,\\
B((\bu_h,\bomega_h),q_h)=&\;0\qquad \qquad  \quad \forall q\in Q_h.
\end{split}
\end{equation}

The next step is to establish the unique solvability 
and convergence of the discrete problem \eqref{probform2d}.

\begin{theorem}\label{theorem-G}
Assume that the hypotheses of Lemma~\ref{lem-elip} hold true.
Let $\bV_h\subseteq\HCUO^d$, $\bW_h\subseteq\LO^{d(d-1)/2}$
and $Q_h\subseteq\LOO$ satisfy \eqref{inf-sup-d}.
Then, there exists a unique $((\bu_h,\bomega_h),p_h)\in(\bV_h\times \bW_h)\times Q_h$
solution to~\eqref{probform2d}.
Moreover, there exist $\hat{C}_1,\,\hat{C}_2>0$,
independent of $h$, such that
\begin{equation*}
\Vert(\bu_h,\bomega_h)\Vert+\Vert p_h\Vert_{0,\O}\le
\hat{C}_1\Vert\ff\Vert_{0,\O},
\end{equation*}
and
\begin{equation}\label{ceaest}
\begin{split}
&\Vert(\bu,\bomega)-(\bu_h,\bomega_h)\Vert
+\Vert p-p_h\Vert_{0,\O}\\
&\qquad\qquad\qquad\le\hat{C}_2\inf_{(\bv_h,\btheta_h,q_h)\in \bV_h\times \bW_h\times\Q_h}
(\nnorm{\bu-\bv_h}_{1,\O}+\Vert \bomega-\btheta_h\Vert_{0,\O}+\Vert p-q_h\Vert_{0,\O}),
\end{split}
\end{equation}
where $((\bu,\bomega),p)\in(\HCUO^d\times\LO^{d(d-1)/2})\times\LOO$ is the
unique solution of   \eqref{probform2}.
\end{theorem}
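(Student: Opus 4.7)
The plan is to mirror the continuous analysis as closely as possible, exploiting the key fact that $\bV_h\times\bW_h\subseteq\HCUO^d\times\LO^{d(d-1)/2}$ is a conforming pair of subspaces. Consequently, the ellipticity of $A$ proven in Lemma~\ref{lem-elip} on the whole space descends without modification to $\bV_h\times\bW_h$ with the same constant $\alpha$, so in particular we do not need to verify ellipticity merely on the discrete kernel of $B$.

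First I would verify the three hypotheses of the discrete version of the Babu\v ska--Brezzi theorem (\cite[Theorem~II.1.1]{bbf-2013}): continuity of $A$ and $B$ with constants $C_1,C_2$ independent of $h$ (immediate from the conformity together with the bounds in Lemma~\ref{bounds} and the consequences collected after it); ellipticity of $A$ on $\bV_h\times\bW_h$ (inherited verbatim from Lemma~\ref{lem-elip}); and the discrete inf-sup condition, which is precisely the assumption~\eqref{inf-sup-d} with constant $\gamma_0>0$. Applying the abstract theorem then yields existence and uniqueness of $((\bu_h,\bomega_h),p_h)$ together with the stability bound with $\hat{C}_1$ depending only on $\alpha$, $\gamma_0$, $C_1$, $C_2$, and the continuity constant of $F$.

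For the Céa-type estimate \eqref{ceaest} I would invoke the abstract quasi-optimality result for mixed problems (for instance \cite[Theorem~II.2.6]{bbf-2013}): writing Galerkin orthogonality $A((\bu-\bu_h,\bomega-\bomega_h),(\bv_h,\btheta_h))+B((\bv_h,\btheta_h),p-p_h)=0$ and $B((\bu-\bu_h,\bomega-\bomega_h),q_h)=0$, I split the errors as $(\bu-\bu_h,\bomega-\bomega_h)=(\bu-\bv_h,\bomega-\btheta_h)-(\bu_h-\bv_h,\bomega_h-\btheta_h)$ and $p-p_h=(p-q_h)-(p_h-q_h)$ for arbitrary $(\bv_h,\btheta_h,q_h)\in\bV_h\times\bW_h\times Q_h$. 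Testing with $(\bu_h-\bv_h,\bomega_h-\btheta_h)$, applying discrete ellipticity and continuity controls the discrete error by the interpolation error plus $\Vert p_h-q_h\Vert_{0,\O}$; the latter is in turn controlled via the discrete inf-sup \eqref{inf-sup-d} applied to $p_h-q_h$. Taking infima yields the stated bound with $\hat{C}_2$ depending only on $\alpha,\gamma_0,C_1,C_2$, hence independent of $h$.

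The only mildly delicate point I foresee concerns the convective contribution, whose continuity estimate in Lemma~\ref{bounds} is expressed through $\Vert\nabla\bu\Vert_{0,\O}$ rather than directly through $\nnorm{\bu}_{1,\O}$; this is harmless because \eqref{iderfgl} gives $\Vert\nabla\bu\Vert_{0,\O}^2=\Vert\curl\bu\Vert_{0,\O}^2+\Vert\vdiv\bu\Vert_{0,\O}^2\le\nnorm{\bu}_{1,\O}^2$, so the product-norm continuity of $A$ is clean. The non-symmetry of $A$ (due to the convective term and to the skew coupling between $\bomega$ and $\curl\bu$) is no obstacle since Babu\v ska--Brezzi requires neither symmetry nor positivity beyond the inf-sup and ellipticity conditions already established.
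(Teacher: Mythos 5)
Your proposal is correct and follows exactly the route the paper intends (the paper leaves this proof implicit, having announced that the discrete analysis repeats the continuous one): conformity transfers the ellipticity of $A$ from Lemma~\ref{lem-elip} to $\bV_h\times\bW_h$, the assumed condition \eqref{inf-sup-d} supplies the discrete inf-sup, and the discrete Babu\v ska--Brezzi theorem together with the standard C\'ea-type quasi-optimality estimate for mixed problems yields existence, uniqueness, stability, and \eqref{ceaest} with constants independent of $h$. Your remark about bounding $\Vert\nabla\bu\Vert_{0,\O}$ by $\nnorm{\bu}_{1,\O}$ via \eqref{iderfgl} is a correct and worthwhile clarification.
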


\subsection{Discrete subspaces and error estimates}

In this section, we will define explicit families of finite element subspaces
yielding the unique solvability of the discrete scheme \eqref{probform2d}.
In addition, we derive the corresponding rate of convergence
for each family.

\subsubsection{Taylor-Hood-$\mathbb{P}_{k}$}
\label{tayhood}

We start {by introducing} a family based on Taylor-Hood \cite{HT} finite
elements for velocity and pressure, and continuous
or discontinuous piecewise polynomial spaces for vorticity.
More precisely, for any $k\ge1$, we consider:
\begin{equation}\label{set1}
\begin{split}
\bV_h:&=\{\bv_h \in C(\overline{\Omega})^d:\bv_h|_{K}\in 
\mathbb{P}_{k+1}(K)^d \quad \forall K \in \mathcal{T}_h\} \cap \HCUO^d,\\
Q_h:&=\{ q_h \in C(\overline{\Omega}): q_h|_{K}\in 
\mathbb{P}_{k}(K) \quad \forall K \in \mathcal{T}_h\} \cap \LOO,\\
\bW_h^1:&=\{\btheta_h \in C(\overline{\Omega})^{d(d-1)/2}: 
\btheta_h|_{K}\in \mathbb{P}_{k}(K)^{d(d-1)/2} \quad \forall K \in \mathcal{T}_h \},\\
\bW_h^2:&=\{\btheta_h \in \LO^{d(d-1)/2}: \btheta_h|_{K}\in\mathbb{P}_{k}(K)^{d(d-1)/2} 
\quad \forall K \in \mathcal{T}_h \}.
\end{split}
\end{equation}

It is well known that $(\bV_h,Q_h)$ satisfies the inf-sup condition
\eqref{inf-sup-d}  \cite{Boffi94}.
In addition, we will consider continuous ($\bW_h^1$)
and discontinuous ($\bW_h^2$) polynomial approximations for vorticity.

Now, we recall the approximation properties of the
spaces specified in  \eqref{set1}.
Assume that $\bu \in \H^{1+s}(\O)^d$, $p \in \H^{s}(\O)$
and $\bomega \in \H^{s}(\O)^{d(d-1)/2}$, for some
$s\in(1/2,k+1]$. Then there exists $C>0$,
independent of $h$, such that
\begin{subequations}
\begin{align}
\inf_{\bv_h \in \bV_h}\nnorm{ \bu -\bv_h}_{1,\O} &\leq C h^{s}\Vert \bu\Vert_{\HusO^d},\label{Ap1}\\
\inf_{q_h \in Q_h}\Vert p -q_h \Vert_{0,\O} &\leq C h^{s}\Vert p \Vert_{\HsO},\label{Ap2}\\
\inf_{\btheta_h \in \bW_h^1}\Vert \bomega -\btheta_h \Vert_{0,\O}
&\leq C h^{s}\Vert \bomega\Vert_{\HsO^{d(d-1)/2}},\label{Ap3}\\
\inf_{\btheta_h \in \bW_h^2}\Vert \bomega -\btheta_h \Vert_{0,\O}
&\leq C h^{s}\Vert \bomega\Vert_{\HsO^{d(d-1)/2}}.\label{Ap33}
\end{align}\end{subequations}

The following theorem provides the rate of convergence of the augmented mixed
scheme \eqref{probform2d}.
\begin{theorem}\label{teoTaylorHood}
Let $k\ge1$ be an integer, and let $\bV_h,Q_h$ and $W^i_h$, $i=1,2$
be specified by \eqref{set1}.
Let $(\bu,\bomega,p)\in\HCUO^d\times\LO^{d(d-1)/2}\times\LOO$ and
$(\bu_h,\bomega_h,p_h)\in \bV_h\times \bW_h^i\times Q_h$ be the unique
solutions to the continuous and discrete problems \eqref{probform2} and
\eqref{probform2d}, respectively.  Assume that
$\bu\in\HusO^d$, $\bomega\in\HsO^{d(d-1)/2}$ and $p\in\HsO$, for some
$s\in(1/2,k+1]$. Then, there exists $\hat{C}>0$, independent of $h$, such
  that
\[\Vert(\bu,\bomega)-(\bu_h,\bomega_h)\Vert
+\Vert p-p_h\Vert_{0,\O}\leq\hat{C}h^s(\Vert\bu\Vert_{\HusO^d}
+\|\bomega\|_{\HsO^{d(d-1)/2}}+\Vert p\Vert_{\HsO}).\]
\end{theorem}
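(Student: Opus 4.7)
The plan is to combine the abstract quasi-optimal estimate \eqref{ceaest} already proved in Theorem~\ref{theorem-G} with the approximation properties \eqref{Ap1}--\eqref{Ap33} of the Taylor--Hood family and the (continuous or discontinuous) polynomial vorticity spaces. Since the bilinear form $A(\cdot,\cdot)$ is continuous and satisfies ellipticity on the kernel (Lemma~\ref{lem-elip}), and since $(\bV_h,Q_h)$ is Stokes-stable so that the discrete inf-sup \eqref{inf-sup-d} holds for either choice $\bW_h^i$, $i=1,2$ (the vorticity block does not affect the divergence inf-sup), Theorem~\ref{theorem-G} applies and yields a unique discrete solution together with
\[
\Vert(\bu,\bomega)-(\bu_h,\bomega_h)\Vert+\Vert p-p_h\Vert_{0,\O}
\le \hat C_{2}\inf_{(\bv_h,\btheta_h,q_h)\in \bV_h\times \bW_h^i\times Q_h}
\bigl(\nnorm{\bu-\bv_h}_{1,\O}+\Vert\bomega-\btheta_h\Vert_{0,\O}+\Vert p-q_h\Vert_{0,\O}\bigr).
\]

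Next, I would bound the right-hand side termwise. For the velocity, any standard quasi-interpolant (e.g.\ Scott--Zhang) preserving the homogeneous Dirichlet trace provides an element of $\bV_h$ satisfying \eqref{Ap1}, hence $\inf_{\bv_h}\nnorm{\bu-\bv_h}_{1,\O}\lesssim h^s\Vert\bu\Vert_{1+s,\O}$ (the triple-bar norm is controlled by the $\H^1$-seminorm via \eqref{iderfgl}, so the same bound holds in $\nnorm{\cdot}_{1,\O}$). For the pressure, a $\LOO$-conforming Lagrange interpolant (adjusted by its mean value) yields \eqref{Ap2}. For the vorticity, in either the continuous case $\bW_h^1$ or the discontinuous case $\bW_h^2$, the $\L^2$-orthogonal projection (or a Clément interpolant for $\bW_h^1$) provides an approximation satisfying \eqref{Ap3} respectively \eqref{Ap33}.

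Inserting these three bounds into the infimum yields
\[
\Vert(\bu,\bomega)-(\bu_h,\bomega_h)\Vert+\Vert p-p_h\Vert_{0,\O}
\le \hat C\, h^{s}\bigl(\Vert\bu\Vert_{\HusO^d}+\Vert\bomega\Vert_{\HsO^{d(d-1)/2}}+\Vert p\Vert_{\HsO}\bigr),
\]
which is the claimed estimate. I do not anticipate any serious obstacle: every ingredient is in place, and the argument is essentially a verification that the abstract Céa estimate from Theorem~\ref{theorem-G} specialises correctly to the Taylor--Hood family. The only minor point to be careful about is the equivalence between the $\nnorm{\cdot}_{1,\O}$-norm appearing in \eqref{ceaest} and the usual $\H^1$-norm used in the standard approximation estimate \eqref{Ap1}, but this has already been made explicit through the identity \eqref{iderfgl} and the Poincaré inequality noted in Section~\ref{sec:model}.
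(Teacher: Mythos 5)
Your proposal is correct and follows exactly the paper's own (one-line) argument: apply the C\'ea-type estimate \eqref{ceaest} from Theorem~\ref{theorem-G} and then invoke the approximation properties \eqref{Ap1}--\eqref{Ap33}. The additional details you supply about specific interpolants and the equivalence of $\nnorm{\cdot}_{1,\O}$ with the $\H^1$-norm are consistent with what the paper leaves implicit.
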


\begin{proof}
The proof follows from \eqref{ceaest} and the 
approximation properties \eqref{Ap1}-\eqref{Ap33}.
\end{proof}

\subsubsection{MINI-element-$\mathbb{P}_{k}$}
\label{minipk}
The second finite element family uses 
the so-called MINI-element for velocity and
pressure, and continuous or discontinuous
piecewise polynomials for vorticity.
Let us introduce the following spaces
(see \cite[Sections 8.6 and 8.7]{bbf-2013}, for further details):
\begin{align*}
\bU_h:&=\{\bv_h \in C(\overline{\Omega})^d:\bv_h|_{K}\in \mathbb{P}_{k}(K)^d \quad \forall K \in \mathcal{T}_h\},\\
\mathbb{B}(b_{K} \nabla H_h):&=\{\bv_{hb} \in \H^1(\O)^d:\bv_{hb}|_{K}=b_K \nabla (q_h)|_{K} \, \text{for some} \, q_h\in H_h \},
\end{align*}
where $b_{K}$ is the standard (cubic or quartic) bubble function
$\lambda_1\cdots\lambda_{d+1}\in\mathbb{P}_{d+1}(K)$,
and let us define the following finite element subspaces:
\begin{equation}\label{set2}
\begin{split}
Q_h:&=\{ q_h \in C(\overline{\Omega}): q_h|_{K}\in \mathbb{P}_{k}(K)
\quad \forall K \in \mathcal{T}_h\} \cap \LOO,\\
\bV_h:&=\bU_h \oplus \mathbb{B}(b_{K} \nabla Q_h)  \cap \HCUO^d,\\
\bW_h^1:&=\{\btheta_h \in C(\overline{\Omega})^{d(d-1)/2}:
\btheta_h|_{K}\in \mathbb{P}_{k}(K)^{d(d-1)/2} \quad \forall K \in \mathcal{T}_h \},\\
\bW_h^2:&=\{\btheta_h \in \LO^{d(d-1)/2}:\btheta_h|_{K}\in\mathbb{P}_{k}(K)^{d(d-1)/2} \quad \forall K \in \mathcal{T}_h \}.
\end{split}
\end{equation}

The rate of convergence of our augmented mixed
finite element scheme considering the above discrete
spaces \eqref{set2} is as follows.
\begin{theorem}\label{teoMINI}
Let $k\ge1$ be an integer, and let $\bV_h,Q_h$ and $W^i_h$, $i=1,2$
be given by \eqref{set2}.
Let $(\bu,\bomega,p)\in\HCUO^d\times\LO^{d(d-1)/2}\times\LOO$ and
$(\bu_h,\bomega_h,p_h)\in \bV_h\times \bW_h^i\times Q_h$ be the unique
solutions to the continuous and discrete problems \eqref{probform2} and
\eqref{probform2d}, respectively.  Assume that
$\bu\in\HusO^d$, $\bomega\in\HsO^{d(d-1)/2}$ and $p\in\HsO$, for some
$s\in(1/2,k]$. Then, there exists $\hat{C}>0$, independent of $h$, such
  that
\[\Vert(\bu,\bomega)-(\bu_h,\bomega_h)\Vert
+\Vert p-p_h\Vert_{0,\O}\leq\hat{C}h^s(\Vert\bu\Vert_{\HusO^d}
+\|\bomega\|_{\HsO^{d(d-1)/2}}+\Vert p\Vert_{\HsO}).\]
\end{theorem}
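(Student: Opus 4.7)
The plan is to reduce the claim to Theorem~\ref{theorem-G}, which already supplies both well-posedness and the quasi-optimal bound \eqref{ceaest} as soon as one verifies that (i) the abstract hypotheses of Lemma~\ref{lem-elip} hold -- automatic, since they involve only the continuous data $\sigma,\nu,\bbeta$ -- and (ii) the MINI-pair together with $\bW_h^i$ satisfies the product inf-sup \eqref{inf-sup-d}. The rate would then follow by substituting the appropriate approximation estimates into \eqref{ceaest}, exactly as was done for Theorem~\ref{teoTaylorHood}.

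For the inf-sup condition, I would appeal to the classical stability of the MINI velocity-pressure pair $(\bV_h,Q_h)$ for Stokes, proved for instance in \cite[Sect.~8.6]{bbf-2013}. Since the bilinear form $B$ in \eqref{defb1} couples the pressure only with the divergence of the velocity component, restricting the supremum to test functions of the form $(\bv_h,\boldsymbol{0})$ immediately lifts the Stokes inf-sup to the product space $\bV_h\times\bW_h^i$, giving \eqref{inf-sup-d} with a constant independent of $h$.

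Next, I would collect the approximation estimates for the spaces in \eqref{set2}. The continuous $\mathbb{P}_k$ pressure space yields \eqref{Ap2}, and the vorticity spaces $\bW_h^1,\bW_h^2$ are defined exactly as in Section~\ref{tayhood}, so \eqref{Ap3}--\eqref{Ap33} transfer verbatim. The single point requiring attention is the velocity estimate: because the MINI velocity space is $\mathbb{P}_k$ enriched with local bubbles -- rather than the $\mathbb{P}_{k+1}$ space underlying Taylor-Hood -- standard interpolation delivers
\[
\inf_{\bv_h\in\bV_h}\nnorm{\bu-\bv_h}_{1,\O}\le Ch^s\Vert\bu\Vert_{\HusO^d}
\]
only for $s\in(1/2,k]$, which is precisely the reduced admissible regularity window appearing in the statement.

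There is no genuine technical obstacle beyond what has already been handled in Theorem~\ref{teoTaylorHood}; the one ``non-routine'' input is the MINI inf-sup, which is quoted from \cite{bbf-2013}, and the only bookkeeping subtlety is the one-order loss in the velocity approximation. Substituting the three approximation bounds into \eqref{ceaest} then produces the announced estimate.
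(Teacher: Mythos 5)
Your proposal is correct and follows exactly the route the paper intends: the paper states Theorem~\ref{teoMINI} without a written proof, the implicit argument being the same as for Theorem~\ref{teoTaylorHood} — invoke the C\'ea estimate \eqref{ceaest} from Theorem~\ref{theorem-G} (whose hypotheses hold since the MINI pair is Stokes inf-sup stable, which lifts to \eqref{inf-sup-d} by testing with $(\bv_h,\boldsymbol{0})$) and insert the approximation properties of the spaces in \eqref{set2}. Your observation that the velocity space is only $\mathbb{P}_k$ plus bubbles, which accounts for the restriction $s\in(1/2,k]$ rather than $s\in(1/2,k+1]$, is precisely the one point of difference from the Taylor--Hood case.
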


\section{A posteriori error estimator}
\label{aposte}

In this section, we propose a residual-based a posteriori
error estimator and prove its reliability and efficiency.
The analysis restricts to the two-dimensional case and using 
continuous finite element approximations for vorticity. Nevertheless, 
the extension to 3D and to discontinuous vorticity follows 
 straightforwardly.

%We start by introducing some notation to be used through this section.
For each $T\in\CT_h$ we let $\CE(T)$ be the set of edges of $T$, and we denote by $\CE_h$
the set of all edges in $\CT_h$, that is 
$$\CE_h=\CE_h(\O)\cup\CE_h(\G),$$
where $\CE_h(\O):=\{e\in\CE_h: e\subset\O\}$, and
$\CE_h(\G):=\{e\in\CE_h: e\subset\G\}$.
In what follows, $h_e$ stands for the diameter of a given edge
$e\in\CE_h$, $\bt_{e}=(-n_2,n_1)$, where $\bn_{e}=(n_1,n_2)$
is a fix unit normal vector of $e$.
Now, let $q\in\LO$ such that $q|_{T}\in C(T)$ for each $T\in\CT_h$,
then given $e\in\CE_h(\O)$, we denote by $[q]$ the jump of $q$ across $e$,
that is $[q]:=(q|_{T'})|_{e}-(q|_{T''})|_{e}$, where $T'$ and $T''$ are the triangles
of $\CT_{h}$ sharing the edge $e$. Moreover,
let $\bv\in\LO^2$ such that $\bv|_{T}\in C(T)^2$ for each $T\in\CT_h$.
Then, given $e\in\CE_h(\O)$, we denote by $[\bv\cdot\bt]$ the tangential
jump of $\bv$ across $e$,
that is, $[\bv\cdot\bt]:=\left((\bv|_{T'})|_{e}-(\bv|_{T''})|_{e}\right)\cdot\bt_e$,
where $T'$ and $T''$ are the triangles
of $\CT_{h}$ sharing the edge $e$.

Next, let $k\ge 1$ be an integer, and let $\bV_h, Q_h$ and $\bW_h^1$
be given as in \eqref{set1} or \eqref{set2}.
Let $(\bu,\omega,p)\in\HCUO^2\times\LO\times\LOO$ and 
$(\bu_h,\omega_h,p_h)\in \bV_h\times \bW_h^1\times Q_h$  be the unique solutions
to the continuous and discrete problems \eqref{probform2} and \eqref{probform2d}, respectively.
%with data satisfying $\ff \in \LO^2$ and $\ff\in\H^1(T)^2$ for each $T\in\CT_h$.
We introduce for each $T\in\CT_h$ the local {\it a posteriori} error indicator and its global counterpart as
\begin{align}
\Theta_{T}^2:=&h_{T}^{2}\Vert \ff -\sigma \bu_h - \nu \curl \omega_h - (\bbeta\cdot\nabla)\bu_h + 2 \beps(\bu_h)\nabla\nu - \nabla p_h\Vert_{0,T}^2 \nonumber\\
&+ \Vert \omega_h - \rot \bu_h \Vert_{0,T}^2 + \Vert\vdiv \bu_h \Vert_{0,T}^2, \qquad 
\Theta^2:=\sum_{T\in\CT_h}\Theta_{T}^2.\label{globalestimator}
\end{align}
Let us now establish reliability and efficiency of \eqref{globalestimator}. 

\subsection{Reliability}

We begin by recalling that the
continuous dependence result given in Theorem~\ref{eyu}
is equivalent to the global inf--sup condition for the
continuous formulation \eqref{probform2}. Then,
applying this estimate to the error
$(\bu-\bu_h,\omega-\omega_h,p-p_h)$,
we obtain
\begin{equation}\label{estiresidual}
\Vert(\bu,\omega)-(\bu_h,\omega_h)\Vert
+ \Vert p-p_h\Vert_{0,\Omega}\le
C_{glob}\sup_{(\bv_h,\theta_h,q_h)\in}
\frac{\mathcal{R}(\bv,\theta, q )}
{\Vert(\bv,\theta, q )\Vert},
\end{equation}
{where the residual functional $\mathcal{R}$ is defined by}
\begin{equation}\label{reliability-1}
\mathcal{R}(\bv,\theta, q )=
A((\bu-\bu_h,\omega-\omega_h),(\bv,\theta))
+B((\bv,\theta),{p-p_h})+B((\bu-\bu_h,\omega-\omega_h),q),
%:=\mathcal{A}((e_{\bu},e_{\omega}, e_ p),(\bv,\theta, q )),
\end{equation}
for all $(\bv,\theta,q)\in\HCUO^2\times\LO\times\LOO$.

Some technical results are provided beforehand.
Let us first recall the  {Cl\'ement-type interpolation ope\-ra\-tor
$\mathcal{I}_h : \H^1_0(\Omega) \to Y_h$, where
$Y_h := \{v_h 	\in C(\overline{\Omega})\cap \H^1_0(\Omega): v_h\Big|_T \in
\mathbb{P}_{1}(T), \forall T \in \mathcal{T}_h\}.$}
This operator satisfies the following local
approximation properties (cf. \cite{clement75}).
\begin{lemma}\label{cle}
There exist positive constants $C_1$ and $C_2$ such that for all 
$v \in \H^1_0(\Omega)$ there hold
\begin{subequations}
\begin{align}
\Vert v - \mathcal{I}_h v \Vert_{0,T}
& \leq C_1 h_T |v|_{1,w_T} \quad \forall T \in \mathcal{T}_h,\\
\Vert v - \mathcal{I}_h v \Vert_{0,e} &\leq C_2 h^{1/2}_e |v|_{1,w_e} \quad \forall e \in \CE_h(\O),
\end{align}\end{subequations}
where $w_T:=\bigcup\{T' \in \mathcal{T}_h: T' \cap T \neq \emptyset\}$
and $w_e:=\bigcup\{T' \in \mathcal{T}_h: T' \cap e \neq \emptyset\}$.
\end{lemma}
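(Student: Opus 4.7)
The plan is to follow the standard construction in \cite{clement75} and deduce both estimates from two ingredients: a local polynomial approximation bound of Bramble--Hilbert type, and a scaled trace inequality.

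First, I would recall the construction of $\mathcal{I}_h$. For each interior vertex $z$ of $\mathcal{T}_h$, one sets $\mathcal{I}_h v(z):=(\pi_z v)(z)$, where $\pi_z: \L^2(w_z)\to\mathbb{P}_1(w_z)$ denotes the $\L^2$-orthogonal projection onto affine polynomials on the vertex patch $w_z:=\bigcup\{T\in\CT_h:\,z\in T\}$, and for boundary vertices one takes $\mathcal{I}_h v(z):=0$ to preserve the homogeneous Dirichlet condition. The interpolant $\mathcal{I}_h v$ is then assembled through the Lagrange nodal basis of $Y_h$.

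Next, on a fixed element $T\in\CT_h$ I would introduce a local affine approximant $\pi_T v\in\mathbb{P}_1(w_T)$ and apply the triangle inequality
\[
\Vert v-\mathcal{I}_h v\Vert_{0,T}\leq \Vert v-\pi_T v\Vert_{0,T}+\Vert \pi_T v-\mathcal{I}_h v\Vert_{0,T}.
\]
The first term is controlled by a Poincar\'e/Bramble--Hilbert argument on the patch, giving $\Vert v-\pi_T v\Vert_{0,w_T}\leq C h_T|v|_{1,w_T}$. The second term lives in a finite-dimensional polynomial space; by a scaling argument from a reference patch (using shape regularity of $\{\CT_h\}_{h>0}$, which bounds the number of elements in each $w_T$ uniformly) and the local stability of the vertex-averaging defining $\mathcal{I}_h$, it is also bounded by $Ch_T|v|_{1,w_T}$. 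This yields the first inequality.

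For the edge bound, I would apply the scaled trace inequality
\[
\Vert w\Vert_{0,e}^2\leq C\bigl(h_T^{-1}\Vert w\Vert_{0,T}^2+h_T|w|_{1,T}^2\bigr),
\]
valid for any $w\in\H^1(T)$ and any edge $e\subset\partial T$, to $w=v-\mathcal{I}_h v$. Combining this with the already obtained volumetric estimate and the local $\H^1$-stability $|\mathcal{I}_h v|_{1,T}\leq C|v|_{1,w_T}$ (again proved by scaling to the reference patch and using the uniform boundedness of the nodal basis), and summing over the two triangles sharing $e$, produces $\Vert v-\mathcal{I}_h v\Vert_{0,e}\leq C h_e^{1/2}|v|_{1,w_e}$.

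The only delicate point is the $\H^1$-stability of $\mathcal{I}_h$ on each patch, since $\pi_z$ is an $\L^2$-projection rather than an $\H^1$-projection; this is handled by a standard reference-patch/scaling argument together with shape-regularity, and is precisely the step that underlies Cl\'ement's original treatment, to which we refer for the technical details.
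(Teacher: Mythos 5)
The paper does not prove this lemma at all: it simply cites Cl\'ement's original article \cite{clement75} and records the two estimates as known properties of the quasi-interpolant. Your sketch reconstructs precisely the classical argument behind that citation (vertex-patch $\L^2$-projections onto $\mathbb{P}_1$, a Bramble--Hilbert/Poincar\'e bound on the patch, stability of the nodal averaging by scaling and shape regularity, and a scaled trace inequality for the edge estimate), so in substance you are supplying the proof the authors chose to outsource, and the overall structure is correct. One point deserves more care than your appeal to ``local stability of the vertex-averaging'': for patches $w_T$ touching $\Gamma$ you set $\mathcal{I}_h v(z)=0$ at boundary vertices, and there the consistency term $\Vert \pi_T v-\mathcal{I}_h v\Vert_{0,T}$ is not controlled by stability alone, since $\pi_T v$ need not vanish at those nodes. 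The standard fix is to use that $v\in\H^1_0(\Omega)$ vanishes on the portion of $\partial w_z$ lying on $\Gamma$, so a Friedrichs inequality on the boundary patch gives $\Vert v\Vert_{0,w_z}\leq C h_z|v|_{1,w_z}$; one then takes the zero function as the local approximant on such patches and concludes directly. With that amendment your argument is complete and matches the treatment in \cite{clement75} (see also \cite{EG2004}, which the paper invokes for the companion interpolation bounds in the reliability proof).
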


The main result of this section is stated as follows.
\begin{theorem}\label{th:reliability}
There exists a positive constant $C_{\mathrm{rel}}$,
independent of  $h$, such that 
\begin{equation}\label{relia}
\Vert(\bu,\omega)-(\bu_h,\omega_h)\Vert
+ \Vert p-p_h\Vert_{0,\Omega}  \leq {C}_{\mathrm{rel}} \;\Theta.
\end{equation}
\end{theorem}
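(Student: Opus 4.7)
The plan is to start from the continuous inf--sup estimate \eqref{estiresidual} and reduce the problem to controlling the residual functional $\mathcal{R}$. Since $(\bu_h,\omega_h,p_h)$ satisfies the discrete problem \eqref{probform2d}, Galerkin orthogonality gives $\mathcal{R}(\bv_h,0,0)=0$ for every $\bv_h\in\bV_h$. Choosing $\bv_h=\mathcal{I}_h\bv$, the componentwise Cl\'ement interpolant, yields the identity $\mathcal{R}(\bv,\theta,q)=\mathcal{R}(\boldsymbol{w},\theta,q)$, with $\boldsymbol{w}:=\bv-\mathcal{I}_h\bv\in\HCUO^2$. The remainder of the argument estimates $\mathcal{R}(\boldsymbol{w},\theta,q)$ term by term using the strong form of \eqref{eq:weak}.

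Next I would expand $\mathcal{R}(\boldsymbol{w},\theta,q)$ from the definitions \eqref{defa1}--\eqref{funct} and split it into three natural groups. For the first group, corresponding to testing with $(\boldsymbol{w},0,0)$, I integrate by parts element-wise using the 2D Green formula \eqref{green2D} to move the derivatives off $\boldsymbol{w}$ in the terms $\int_{\Omega}\nu\omega_h\cdot\curl\boldsymbol{w}$ and $\int_{\Omega}p_h\vdiv\boldsymbol{w}$. All edge jump contributions on interior edges vanish because $\omega_h\in\bW_h^1$, $p_h\in Q_h$ and $\nu$ are continuous, and boundary edge contributions vanish because $\boldsymbol{w}|_{\Gamma}=0$. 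After applying identity \eqref{identy} in the form $\curl(\nu\omega_h)=\nu\curl\omega_h+\omega_h\curl\nu$, the resulting piece $\int_{\Omega}\omega_h\curl\nu\cdot\boldsymbol{w}$ cancels exactly the $-\int_{\Omega}\omega_h\cdot(\nabla\nu\times\boldsymbol{w})$ term coming from the bilinear form, leaving the volume integral $\sum_{T}\int_{T} R_m\cdot\boldsymbol{w}$ with
\[
R_m|_T=\bigl(\ff-\sigma\bu_h-\nu\curl\omega_h-(\bbeta\cdot\nabla)\bu_h+2\beps(\bu_h)\nabla\nu-\nabla p_h\bigr)|_T.
\]
The augmentation contributions $\kappa_1\int_{\Omega}(\omega_h-\curl\bu_h)\curl\boldsymbol{w}$ and $-\kappa_2\int_{\Omega}\vdiv\bu_h\,\vdiv\boldsymbol{w}$ are left in weak form. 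The second and third groups, obtained by testing with $(0,\theta,0)$ and $(0,0,q)$, reduce respectively to $-\int_{\Omega}\nu(\omega_h-\curl\bu_h)\theta$ and $\int_{\Omega}q\,\vdiv\bu_h$.

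Finally I would apply Cauchy--Schwarz to each piece. For the volume momentum residual, Lemma~\ref{cle} gives $\|\boldsymbol{w}\|_{0,T}\le C\,h_T|\bv|_{1,w_T}$, which summed with the finite-overlap property of the patches $w_T$ produces the bound $\bigl(\sum_T h_T^2\|R_m\|_{0,T}^2\bigr)^{1/2}\|\bv\|_{1,\Omega}$. For the two augmentation pieces the $H^1$-stability $\|\curl\boldsymbol{w}\|_{0,\Omega}+\|\vdiv\boldsymbol{w}\|_{0,\Omega}\le C\|\bv\|_{1,\Omega}$ of the Cl\'ement operator, together with the identity $\|\nabla\bv\|_{0,\Omega}^2=\|\curl\bv\|_{0,\Omega}^2+\|\vdiv\bv\|_{0,\Omega}^2$, yields the factors $\|\omega_h-\curl\bu_h\|_{0,\Omega}$ and $\|\vdiv\bu_h\|_{0,\Omega}$ against $\|\bv\|_{1,\Omega}$. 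The constitutive and mass pieces are estimated directly by $\nu_1\|\omega_h-\curl\bu_h\|_{0,\Omega}\|\theta\|_{0,\Omega}$ and $\|\vdiv\bu_h\|_{0,\Omega}\|q\|_{0,\Omega}$. Collecting everything gives $\mathcal{R}(\bv,\theta,q)\le C\,\Theta\,\|(\bv,\theta,q)\|$, and inserting this into \eqref{estiresidual} produces \eqref{relia}.

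\textbf{Main obstacle.} The delicate point is the reassembly after the element-wise integration by parts of the viscous term with variable $\nu$: it must be verified that the side term produced by $\curl(\nu\omega_h)$ cancels exactly with the cross-product contribution $-\int_{\Omega}\omega_h\cdot(\nabla\nu\times\boldsymbol{w})$ coming directly from $A(\cdot,\cdot)$, so that the strong momentum residual $R_m$ alone appears in $\Theta$ and no jump terms across edges survive. This cancellation, together with the continuity of $\omega_h,p_h,\nu$, is precisely what keeps the indicator $\Theta_T^2$ free of any edge (tangential or normal) jumps; losing either ingredient would introduce additional edge terms that would have to be added to $\Theta$.
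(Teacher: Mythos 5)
Your proposal is correct and follows essentially the same route as the paper: bound the error by the residual via the global inf--sup estimate \eqref{estiresidual}, rewrite $\mathcal{R}$ using element-wise integration by parts together with the product identity $\rot(\nu\bv)=\nabla\nu\times\bv+\nu\rot\bv$ (so that the $\nabla\nu\times$ term is absorbed into the strong momentum residual and continuity of $\omega_h$, $p_h$, $\nu$ kills all edge contributions), invoke Galerkin orthogonality with a Cl\'ement interpolant, and conclude with Cauchy--Schwarz and the local interpolation estimates. The only (inessential) deviation is that you apply orthogonality in the velocity slot alone and bound the $\theta$- and $q$-terms directly, whereas the paper also subtracts $\L^2$-projections of $\theta$ and $q$ before estimating; both yield the same bound.
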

\begin{proof}
From \eqref{reliability-1} and the continuous
problem~\eqref{probform2}, we have that,
\begin{align*}
\mathcal{R}(\bv,\theta, q )
& = \int_{\O}\ff\cdot\bv-\Big(A((\bu_h,\omega_h),(\bv,\theta))
+B((\bv,\theta),p_h)+B((\bu_h,\omega_h),q)\Big)\\
&=\int_{\O}\big(\ff-\sigma \bu_h-(\bbeta\cdot \nabla)\bu_h+2\beps(\bu_h)\nabla \nu\big)\cdot \bv
-\int_{\O}\nu (\omega_h-\rot \bu_h)\theta\\
&\quad-\kappa_1 \int_{\O}(\rot \bu_h-\omega_h)\rot\bv -\kappa_2 \int_{\O} \vdiv \bu_h \vdiv \bv\\
&\quad-\left( \int_{\O}\nu \omega_h\rot \bv+\int_{\O}\omega_h(\nabla\nu\times\bv)\right)
+\int_{\O}p_h\vdiv \bv+\int_{\O}q\vdiv \bu_h.
\end{align*}
Using the identity $\rot(\nu\bv)=\nabla\nu\times\bv+\nu\rot\bv$
and integration by parts on the above residual (cf. \eqref{green2D}), we obtain
\begin{align*}
\mathcal{R}(\bv,\theta, q)
&=\int_{\O}(\ff-\sigma \bu_h-(\bbeta\cdot \nabla)\bu_h+2\beps(\bu_h)\nabla \nu)\cdot \bv
-\int_{\O}\nu (\omega_h-\rot \bu_h)\theta\\
&-\kappa_1 \int_{\O}(\rot \bu_h-\omega_h)\rot \bv 
-\kappa_2 \int_{\O} \vdiv \bu_h \vdiv \bv+\int_{\O}q\vdiv \bu_h\\
&-\sum_{T\in \mathcal{T}_h}\left( \int_{T}\nu \curl \omega_h \cdot \bv
-\langle \bv\cdot\bt,\nu\omega_h \rangle_{\partial T} -\int_{T}\nabla p_h\cdot \bv+\langle \bv\cdot \bn, p_h\rangle_{\partial T}\right)\\
&=\sum_{T\in \mathcal{T}_h}\int_{T}(\ff-\sigma \bu_h-\nu \curl \omega_h-(\bbeta\cdot \nabla)\bu_h+2\beps(\bu_h)\nabla \nu-\nabla p_h)\cdot \bv\\
& -\int_{\O}\nu (\omega_h-\rot \bu_h)\theta-\kappa_1 \int_{\O}(\rot\bu_h-\omega_h)\rot \bv
-\kappa_2 \int_{\O} \vdiv \bu_h \vdiv \bv +\int_{\O}q\vdiv \bu_h,
\end{align*}
where we have used the fact  {that} $\omega_h$ and $p_h$
are piecewise continuous functions.
Hence, since from \eqref{reliability-1} we have $\mathcal{R}(\bv_h,\theta_h,q_h)=0$,
we obtain
\begin{align*}
\mathcal{R}(\bv,\theta, q)&=\mathcal{R}(\bv-\bv_h,\theta-\theta_h, q-q_h)\\
&=\sum_{T\in \mathcal{T}_h}\int_{T}\big(\ff-\sigma \bu_h-\nu \curl \omega_h
-(\bbeta\cdot \nabla)\bu_h+2\beps(\bu_h)\nabla \nu-\nabla p_h\big)\cdot (\bv-\bv_h)\\
&\quad -\int_{\O}\nu (\omega_h-\rot \bu_h)(\theta-\theta_h)
-\kappa_1 \int_{\O}(\rot\bu_h-\omega_h)\rot (\bv-\bv_h)\\
&\quad-\kappa_2 \int_{\O} \vdiv \bu_h \vdiv (\bv-\bv_h)+\int_{\O}(q-q_h)\vdiv \bu_h.
\end{align*}

{Thus,} it suffices to take $\bv_h:=\mathcal{I}_h(\bv)$ (cf. Lemma~\ref{cle}),
and $\theta_h:=\Pi(\theta)$ and $q_h:=\Pi(q)$ with
$\Pi$ being the $L^2$-projection onto  piecewise constants. And then, using the 
Cauchy-Schwarz inequality, triangle inequality, properties
for $\mathcal{I}_h$ given by Lemma~\ref{cle}  and \cite[Lemma 1.127]{EG2004},
and approximation properties for $\Pi$, we  obtain
\begin{align*}
\mathcal{R}(\bv,\theta, q)&\leq C_1 \sum_{T\in \mathcal{T}_h}h_T\Vert \ff
-\sigma \bu_h-\nu \curl \omega_h-(\bbeta\cdot \nabla)\bu_h
+2\beps(\bu_h)\nabla \nu-\nabla p_h\Vert_{0,T}\vert \bv\vert_{1,w_T}\\
&\quad +\sum_{T\in \mathcal{T}_h} (\nu_1+\kappa_1)\Vert \omega_h
-\rot \bu_h\Vert_{0,T} (C_3\Vert \theta\Vert_{0,T}+ \vert \bv-\bv_h \vert_{1,T}) \\
&\quad+\sum_{T\in \mathcal{T}_h}(\kappa_2+1) \Vert \vdiv \bu_h \Vert_{0,T}(\vert \bv-\bv_h\vert_{1,T}+C_4\Vert q\Vert_{0,T})\\
&\leq  \widehat{C}_1 \left( \sum_{T\in \mathcal{T}_h}h_T^2\Vert f-\sigma \bu_h-\nu \curl\omega_h-(\bbeta\cdot \nabla)\bu_h+2\beps(\bu_h)\nabla \nu-\nabla p_h\Vert_{0,T}^2\right)^{1/2}\!\!\Vert \bv\Vert_{1,\O}\\
&\quad +\widehat{C}_2\left( \sum_{T\in \mathcal{T}_h}\Vert\omega_h-\rot\bu_h\Vert_{0,T}^2\right)^{1/2} (\Vert \theta\Vert_{0,\O}+ {\Vert \bv\Vert_{1,\O}}) \\
&\quad +\widehat{C}_3\left( \sum_{T\in \mathcal{T}_h} \Vert \vdiv \bu_h \Vert_{0,T}^2\right)^{1/2}(\Vert \bv\Vert_{1,\O}+\Vert q\Vert_{0,\O}).
\end{align*}
And the proof of \eqref{relia} follows from \eqref{estiresidual}
and the above estimate. 
\end{proof}

\subsection{Efficiency}
This subsection deals with the efficiency of the  {\it a posteriori}
error estimator. For simplicity, we  {will} assume that
the given convective velocity $\bbeta$ and the viscosity $\nu$
are polynomial functions both of degree $s$.
The general case can be proved by repeating the same arguments
and requiring an additional regularity for the data.

%in the ---- norm
%In this section, we are going assume
%that the viscocity is a polynomial function. 

A major role in the proof of efficiency is played by element and edge bubbles (locally supported non-negative functions), whose definition we recall in what follows. 
For $T\in\mathcal{T}_{h}(\O)$ and $e\in \CE(T)$, let $\psi_T$ and $\psi_e$, respectively, be the interior and edge bubble functions defined as in, e.g., \cite{ain-ode}. Let $\psi_T\in \mathbb{P}_3(T)$ with $supp (\psi_T) \subset T,$  $\psi_T=0$ on $\partial T$ and
$0\leq \psi_T \leq 1$ in $T.$ Moreover,  let $\psi_e |_T \in \mathbb{P}_2(T)$ with  $supp (\psi_e) \subset \O_e := \{T' \in \mathcal{T}_h(\O):
e\in \CE(T')\},$ $\psi_e =0$ on $\partial T\setminus  e,$ and $0\leq \psi_e \leq 1 $ in $\O_e.$  Again, let us recall an extension  operator $E: C^0(e) \mapsto C^0(T)$ that satisfies $E(q)\in \mathbb{P}_k(T)$ and $E(q)|_e = q$ for all $q\in \mathbb{P}_k(e)$ and for all 
$k\in \N\cup \{0\}.$

We now summarise  the properties of $\psi_T, \psi_e$ and $E$
in the following lemma (see \cite{ain-ode,verfuerth96}).
\begin{lemma}\label{lem:psi}
The following properties hold:
\begin{itemize}
\item [(i)] For $T\in \mathcal{T}_h$ and for $v\in \mathbb{P}_k(T)$, there is a positive constant $C_1$  such that 
\begin{align*}
C_1^{-1}\; \|v\|^2_{0,T} \leq \int_{T} \psi_T v^2 \dx \leq C_1 \|v\|^2_{0,T},\qquad 
C_1^{-1}\; \|v\|^2_{0,T} \leq \|\psi v\|^2_{0,T} + h_T^2  |\psi v|^2_{1,T} \leq C_1 \|v\|^2_{0,T}. 
\end{align*}
\item [(ii)] For $e\in \CE_h$ and $v\in \mathbb{P}_k(e)$, there  exists a positive constant, say $C_1$, such that
\[C_1^{-1}\; \|v\|^2_{0,e} \leq \int_{e} \psi_e v^2 ds \leq C_1 \|v\|^2_{0,e} .\]
\item [(iii)] For $T\in \mathcal{T}_h$, $e\in \CE(T)$ and  for $v\in \mathbb{P}_k(e)$, there is a positive constant, 
again say $C_1$, 
such that
\[ \| \psi_e^{1/2}  \;E(v) \|^2_{0,T}  \leq C_1 h_e\; \|v\|^2_{0,e} .\]
\end{itemize}
\end{lemma}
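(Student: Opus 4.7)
The plan is to prove Lemma~\ref{lem:psi} by the classical scaling/norm-equivalence strategy on a fixed reference triangle $\hat T$ (and reference edge $\hat e$), using that all the functions involved are polynomials of bounded degree and that the pulled-back bubbles $\psi_{\hat T}$ and $\psi_{\hat e}$ are strictly positive on the interior of $\hat T$ and on the relative interior of $\hat e$, respectively.

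\emph{For (i)}, I would first observe that on $\hat T$ the map $\hat v \mapsto \bigl(\int_{\hat T}\psi_{\hat T}\hat v^{2}\bigr)^{1/2}$ defines a norm on the finite-dimensional space $\mathbb{P}_{k}(\hat T)$: positivity of $\psi_{\hat T}$ on the interior forces $\hat v\equiv 0$ whenever this quantity vanishes. Since $\|\cdot\|_{0,\hat T}$ is another norm on $\mathbb{P}_k(\hat T)$, norm equivalence on finite-dimensional spaces yields the first pair of inequalities on $\hat T$. The affine change of variables $T\to\hat T$ produces Jacobians that scale as $h_T^{d}$ on \emph{both} sides of the equivalence, so the constants remain $h$-independent. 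The second inequality of (i) uses the same argument with the competing norm $\hat v \mapsto \|\psi_{\hat T}\hat v\|_{0,\hat T}+|\psi_{\hat T}\hat v|_{1,\hat T}$ on $\mathbb{P}_{k}(\hat T)$; under pull-back the $L^2$-term scales as $h_T^{d}$ while $|\cdot|_{1,T}^{2}$ scales as $h_T^{d-2}$, and the prefactor $h_T^{2}$ in front of $|\psi_T v|_{1,T}^{2}$ precisely compensates this mismatch.

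\emph{For (ii)}, the same idea is applied to the reference edge $\hat e$: $\hat v \mapsto \bigl(\int_{\hat e}\psi_{\hat e}\hat v^{2}\bigr)^{1/2}$ and $\|\cdot\|_{0,\hat e}$ are equivalent norms on $\mathbb{P}_{k}(\hat e)$, and the affine map $e\to\hat e$ introduces Jacobians that scale as $h_e^{d-1}$ on both sides, preserving the equivalence.

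\emph{For (iii)}, I would define $E$ on the reference element (e.g. by extending $\hat v$ constantly along the direction normal to $\hat e$ and multiplying by a polynomial cutoff, so that $E(\hat v)\in\mathbb{P}_{k}(\hat T)$ and $E(\hat v)|_{\hat e}=\hat v$). By finite-dimensionality of $\mathbb{P}_{k}(\hat T)$ the bound $\|\psi_{\hat e}^{1/2}E(\hat v)\|_{0,\hat T}\le C\|\hat v\|_{0,\hat e}$ is immediate. The only genuinely non-routine bookkeeping is the scaling step, which is also the main (mild) obstacle: pulling back to $T$ the left-hand side carries a factor $h_T^{d}$ while $\|v\|_{0,e}^{2}$ carries $h_e^{d-1}$, and under shape-regularity $h_T\simeq h_e$, so the ratio produces exactly the single factor $h_e$ announced in the statement. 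Beyond the careful handling of these exponents the rest is standard polynomial norm equivalence, and the proof of all three items is obtained by combining these ingredients in the obvious way.
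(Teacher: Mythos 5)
Your argument is correct and is precisely the standard reference-element/scaling proof of these bubble-function properties; the paper itself gives no proof but simply cites Ainsworth--Oden and Verf\"urth, where exactly this norm-equivalence-plus-affine-scaling strategy is carried out. The only cosmetic point is that the quantity $\|\psi v\|_{0,T}^2+h_T^2|\psi v|_{1,T}^2$ in item (i) should read $\psi_T v$, and your bookkeeping of the Jacobian exponents (including the $h_T^{d}/h_e^{d-1}\simeq h_e$ ratio in (iii) under shape regularity) is exactly right.
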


The following classical result which states
an inverse estimate will also be used.
\begin{lemma}\label{inverse}
Let $k, l, m\in\N\cup\{0\}$ such that $l\le m$. Then, there exists $\tilde{C}>0$, depending only on
$k,l,m$ and the shape regularity of the triangulations, such that for each triangle $T$
there holds
\begin{equation*}
\vert q\vert_{m,T}\le \tilde{C} h_{T}^{l-m}\vert q\vert_{l,T}\quad\forall q\in\P_{k}(T).
\end{equation*}
\end{lemma}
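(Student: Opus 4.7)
The plan is to prove the inverse estimate by the standard scaling argument: transplant $q$ to a fixed reference triangle $\hat{T}$, apply norm equivalence on the finite-dimensional space $\mathbb{P}_k(\hat{T})$, and push the estimate back to $T$ tracking the powers of $h_T$ that appear through the affine change of variables.

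First I would fix a reference triangle $\hat{T}$ and, for each $T\in\cT_h$, the affine bijection $F_T(\hat{x})=B_T\hat{x}+b_T$ mapping $\hat{T}$ onto $T$. For $q\in\mathbb{P}_k(T)$ set $\hat{q}:=q\circ F_T\in\mathbb{P}_k(\hat{T})$. Since $\mathbb{P}_k(\hat{T})$ is finite-dimensional, a constant $\bar{C}=\bar{C}(k,l,m,\hat{T})>0$ exists for which
\begin{equation*}
|\hat{q}|_{m,\hat{T}} \le \bar{C}\,|\hat{q}|_{l,\hat{T}} \qquad \forall\,\hat{q}\in\mathbb{P}_k(\hat{T}).
\end{equation*}
For $l=0$ this is the equivalence between the $\L^2$ norm and the $m$-th order seminorm on the finite-dimensional space $\mathbb{P}_k(\hat{T})$. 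For $1\le l\le m$, note that both $|\cdot|_{l,\hat{T}}$ and $|\cdot|_{m,\hat{T}}$ vanish exactly on $\mathbb{P}_{l-1}(\hat{T})$ (the latter because $m\ge l$), hence both induce norms on the finite-dimensional quotient $\mathbb{P}_k(\hat{T})/\mathbb{P}_{l-1}(\hat{T})$ and are therefore equivalent there.

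Next, I would combine this with the classical scaling identities for Sobolev seminorms under affine pull-backs,
\begin{equation*}
|\hat{q}|_{j,\hat{T}}\le C\,\|B_T\|^{\,j}\,|\!\det B_T|^{-1/2}\,|q|_{j,T},\qquad
|q|_{j,T}\le C\,\|B_T^{-1}\|^{\,j}\,|\!\det B_T|^{1/2}\,|\hat{q}|_{j,\hat{T}},
\end{equation*}
applied for $j=m$ in the second inequality and $j=l$ in the first. Shape regularity of $\{\cT_h\}_{h>0}$ provides $\|B_T\|\le C h_T$, $\|B_T^{-1}\|\le C h_T^{-1}$ and $|\!\det B_T|\sim h_T^d$. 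Chaining the three steps gives
\begin{equation*}
|q|_{m,T}\le C\,h_T^{-m+d/2}\,|\hat{q}|_{m,\hat{T}}\le C\bar{C}\,h_T^{-m+d/2}\,|\hat{q}|_{l,\hat{T}}\le \tilde{C}\,h_T^{-m+d/2}\,h_T^{l-d/2}\,|q|_{l,T}=\tilde{C}\,h_T^{l-m}\,|q|_{l,T},
\end{equation*}
with $\tilde{C}$ depending only on $k,l,m$ and the shape-regularity constant, which is the desired bound.

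The only step requiring any attention is the norm equivalence on $\mathbb{P}_k(\hat{T})$ when $l\ge 1$, where one must notice that the kernels of the two seminorms coincide so that passing to the quotient restores genuine norms; everything else is bookkeeping of the powers of $h_T$ arising from the affine scaling, as is standard for inverse estimates.
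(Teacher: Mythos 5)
Your scaling argument is the standard proof of this inverse estimate and it is essentially correct; note, however, that the paper itself offers no proof at all --- it simply states the lemma as a classical result (the usual references being Ciarlet or Ern--Guermond \cite{EG2004}), so you have supplied the argument the authors take for granted. One point in your reference-element step is misstated, though it does not break the proof: for $1\le l<m$ the two seminorms do \emph{not} have the same kernel on $\mathbb{P}_k(\hat{T})$. The kernel of $|\cdot|_{l,\hat{T}}$ is $\mathbb{P}_{l-1}(\hat{T})$, but the kernel of $|\cdot|_{m,\hat{T}}$ restricted to $\mathbb{P}_k(\hat{T})$ is $\mathbb{P}_{\min(m-1,k)}(\hat{T})$, which strictly contains $\mathbb{P}_{l-1}(\hat{T})$ whenever $k\ge l$ and $m\ge l+1$; consequently the two are not equivalent on the quotient $\mathbb{P}_k(\hat{T})/\mathbb{P}_{l-1}(\hat{T})$ (take $\hat{q}\in\mathbb{P}_{m-1}\setminus\mathbb{P}_{l-1}$ to kill the reverse inequality). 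What you actually need, and what does hold, is only the inclusion $\ker|\cdot|_{l,\hat{T}}\subseteq\ker|\cdot|_{m,\hat{T}}$: then $|\cdot|_{m,\hat{T}}$ descends to a \emph{seminorm} on the quotient, $|\cdot|_{l,\hat{T}}$ descends to a norm, and any seminorm on a finite-dimensional space is bounded by a constant times any norm, giving the one-sided bound $|\hat{q}|_{m,\hat{T}}\le\bar{C}|\hat{q}|_{l,\hat{T}}$ that your chaining uses. With that one-line correction the bookkeeping of the powers of $h_T$ through $\|B_T\|\le Ch_T$, $\|B_T^{-1}\|\le Ch_T^{-1}$ and $|\det B_T|\sim h_T^d$ is exactly right and yields the stated estimate with $\tilde{C}$ depending only on $k,l,m$ and the shape-regularity constant.
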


In order to prove the efficiency of the a posteriori error estimator, we will bound
each term defining $\Theta_T$ in terms of local errors.

\begin{theorem} \label{thm:efficiency}
There is a positive constant $C_{\mathrm{eff}}$, independent of $h$, such that 
\[
C_{\mathrm{eff}} \;\Theta \leq \;\Vert(\bu,\omega)
-(\bu_h,\omega_h)\Vert
+ \Vert p-p_h\Vert_{0,\Omega} + \mathrm{ h.o.t.},
\]
where  $\mathrm{ h.o.t.}$  denotes higher-order terms.
\end{theorem}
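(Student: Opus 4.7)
My plan is to bound each of the three contributions to $\Theta_T^2$ separately. The two terms $\|\omega_h-\rot\bu_h\|_{0,T}$ and $\|\vdiv\bu_h\|_{0,T}$ are essentially immediate: by Remark~\ref{uniquesolu}, the exact solution satisfies $\omega=\rot\bu$ and $\vdiv\bu=0$ pointwise, so the triangle inequality gives
\[
\|\omega_h-\rot\bu_h\|_{0,T}\leq \|\omega-\omega_h\|_{0,T}+\|\rot(\bu-\bu_h)\|_{0,T},\qquad \|\vdiv\bu_h\|_{0,T}=\|\vdiv(\bu-\bu_h)\|_{0,T},
\]
both of which are directly controlled by local contributions to $\nnorm{\bu-\bu_h}_{1,\O}+\|\omega-\omega_h\|_{0,\O}$ after squaring and summing.

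The real work concerns the momentum residual. The key observation (again from Remark~\ref{uniquesolu}) is that, subtracting the strong form from the definition of the local residual, one has pointwise in $T$
\[
R_T := \ff-\sigma\bu_h-\nu\curl\omega_h-(\bbeta\cdot\nabla)\bu_h+2\beps(\bu_h)\nabla\nu-\nabla p_h = \sigma(\bu-\bu_h)+\nu\curl(\omega-\omega_h)-2\beps(\bu-\bu_h)\nabla\nu+(\bbeta\cdot\nabla)(\bu-\bu_h)+\nabla(p-p_h).
\]
Under the hypothesis that $\bbeta$ and $\nu$ are polynomial, $R_T$ is a polynomial, and Lemma~\ref{lem:psi} applies to $\bv_T:=\psi_T R_T$, which I extend by zero to $\Omega$. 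Then $\|R_T\|_{0,T}^2\lesssim \int_T \psi_T|R_T|^2=\int_T R_T\cdot\bv_T$. I substitute the identity above, integrate by parts in the $\nu\curl(\omega-\omega_h)$ term via \eqref{green2D} and \eqref{identy} (the $\partial T$ contribution vanishes because $\bv_T$ has compact support in $T$), integrate by parts in the $\nabla(p-p_h)$ term, and apply Cauchy--Schwarz together with the bubble estimate $\|\bv_T\|_{0,T}\lesssim \|R_T\|_{0,T}$ and the inverse inequality $\|\bv_T\|_{1,T}\lesssim h_T^{-1}\|R_T\|_{0,T}$ from Lemma~\ref{inverse}. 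After dividing by $\|R_T\|_{0,T}$, this yields
\[
h_T\|R_T\|_{0,T}\lesssim \|\bu-\bu_h\|_{0,T}+\|\omega-\omega_h\|_{0,T}+\|p-p_h\|_{0,T}+h_T\|\nabla(\bu-\bu_h)\|_{0,T},
\]
with hidden constants depending on $\sigma,\nu_0,\nu_1,\|\nabla\nu\|_{\infty,\O}$ and $\|\bbeta\|_{\infty,\O}$. Squaring, summing over $T\in\CT_h$, and combining with the two trivial bounds yields \eqref{thm:efficiency}.

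The main delicate step is the integration by parts on the $\nu\curl(\omega-\omega_h)$ contribution: because $\nu$ is non-constant, one must carefully apply \eqref{identy} to separate a $\nu(\omega-\omega_h)\rot\bv_T$ part from a $(\omega-\omega_h)(\nabla\nu\times\bv_T)$ part, both of which land on $\|\omega-\omega_h\|_{0,T}$ through Cauchy--Schwarz (the second requires only $\|\bv_T\|_{0,T}$, so it is even smaller). The higher-order terms $\mathrm{h.o.t.}$ appear precisely when the polynomial hypothesis on $\bbeta$ and $\nu$ is dropped: replacing them by suitable local polynomial interpolants $\bbeta_h^\pi$ and $\nu_h^\pi$ of the same degree and repeating the bubble-function argument produces an extra contribution of the form $h_T\,\|(\nu-\nu_h^\pi)\curl\omega_h-((\bbeta-\bbeta_h^\pi)\cdot\nabla)\bu_h+2\beps(\bu_h)\nabla(\nu-\nu_h^\pi)\|_{0,T}$, which is of higher order with respect to $h_T$ by standard approximation theory for smooth data, accounting for the $\mathrm{h.o.t.}$ in the statement.
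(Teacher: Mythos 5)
Your overall strategy coincides with the paper's: the residuals $\Vert\omega_h-\rot\bu_h\Vert_{0,T}$ and $\Vert\vdiv\bu_h\Vert_{0,T}$ are handled by the triangle inequality using $\omega=\rot\bu$ and $\vdiv\bu=0$, and the momentum residual is treated with the interior bubble function, substitution of the strong momentum equation (Remark~\ref{uniquesolu}), local integration by parts on the $\curl(\omega-\omega_h)$ and $\nabla(p-p_h)$ contributions via \eqref{green2D} and \eqref{identy}, Cauchy--Schwarz, and the inverse inequality of Lemma~\ref{inverse}. Your pointwise identity for the residual and the resulting local bound $h_T\Vert R_T\Vert_{0,T}\lesssim \Vert\bu-\bu_h\Vert_{0,T}+\Vert\omega-\omega_h\Vert_{0,T}+\Vert p-p_h\Vert_{0,T}+h_T\Vert\nabla(\bu-\bu_h)\Vert_{0,T}$ are correct in form.

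There is, however, a gap at the step where you assert that, for polynomial $\bbeta$ and $\nu$, the residual $R_T$ is a polynomial so that Lemma~\ref{lem:psi} applies to $\bv_T=\psi_T R_T$. The residual contains the datum $\ff$, which is only assumed to lie in $\L^2(\O)^d$ and is in general not piecewise polynomial; the norm-equivalence properties of the bubble function are stated (and hold) only for $v\in\mathbb{P}_k(T)$, since $\psi_T$ vanishes on $\partial T$ and the equivalence constants degenerate outside a fixed finite-dimensional space. Your closing discussion of the higher-order terms covers only the non-polynomial character of $\bbeta$ and $\nu$, not of $\ff$, so as written the argument silently treats $\ff$ as polynomial. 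The paper repairs exactly this point by first splitting $\ff=(\ff-\cP_{T}^{\ell}(\ff))+\cP_{T}^{\ell}(\ff)$ with a weighted local $\L^2$-orthogonal projection onto $\mathbb{P}_{\ell}(T)^2$, running the bubble-function argument on the projected (genuinely polynomial) residual, and absorbing $\Vert\ff-\cP_{T}^{\ell}(\ff)\Vert_{0,T}$ into the higher-order terms under additional regularity of $\ff$. Applying to $\ff$ the same oscillation device you already propose for $\bbeta$ and $\nu$ closes the gap; with that amendment your proof matches the paper's.
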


\begin{proof}
Using that $\omega - \rot \bu=0$ and $\vdiv\bu=0$ in $\Omega$
(see \eqref{eq:constitutive1} and \eqref{eq:mass1}, respectively), we immediately have that
\[\Vert \omega_h - \rot \bu_h \Vert_{0,T} + \Vert\vdiv \bu_h \Vert_{0,T}\le
\Vert\rot(\bu-\bu_h)\Vert_{0,T}+\Vert\vdiv(\bu-\bu_h)\Vert_{0,T}
+\Vert\omega-\omega_h\Vert_{0,T}.\]

On the other hand, with the help of the $\L^2(T)^2$-orthogonal
projection $\cP_{T}^{\ell}$ onto $\mathbb{P}_{\ell}(T)^2$, for $\ell\geq (s+k+1),$
with respect to the weighted $\L^2$-inner product $(\psi_T \ff,\bg)_{0,T}$, for 
$\ff,\bg\in \L^2(T)^2,$ it now follows that
\begin{align*}
\Vert \ff -\sigma \bu_h - & \nu \curl \omega_h - (\bbeta\cdot\nabla)\bu_h
+ 2 \beps(\bu_h)\nabla\nu - \nabla p_h\Vert_{0,T}^2 \\
 &=\Vert \ff - \cP_{T}^{\ell}(\ff) + \cP_{T}^{\ell}(\ff) 
 -\sigma \bu_h - \nu \curl \omega_h - (\bbeta\cdot\nabla)\bu_h
 + 2 \beps(\bu_h)\nabla\nu - \nabla p_h\Vert_{0,T}^2\\
&\leq \Vert \ff - \cP_{T}^{\ell}(\ff) \Vert_{0,T}^2
+ \Vert \cP_{T}^{\ell}(\ff)  -\sigma \bu_h - \nu \curl \omega_h - (\bbeta\cdot\nabla)\bu_h + 2 \beps(\bu_h)\nabla\nu - \nabla p_h\Vert_{0,T}^2\\
&= \Vert \ff - \cP_{T}^{\ell}(\ff) \Vert_{0,T}^2 + \Vert \cP_{T}^{\ell}(\ff 
-\sigma \bu_h - \nu \curl \omega_h - (\bbeta\cdot\nabla)\bu_h 
+ 2 \beps(\bu_h)\nabla\nu - \nabla p_h)\Vert_{0,T}^2.
\end{align*}
For the second term on the right-hand side, an {application} of Lemma~\ref{lem:psi} shows that
\begin{align*}
\Vert \cP_{T}^{\ell}(\ff -\sigma \bu_h & - \nu \curl \omega_h - (\bbeta\cdot\nabla)\bu_h +  2 \beps(\bu_h)\nabla\nu - \nabla p_h)\Vert_{0,T}^2 \\
& \leq \Vert \psi_T^{1/2} \cP_{T}^{\ell}(\ff -\sigma \bu_h - \nu \curl \omega_h - (\bbeta\cdot\nabla)\bu_h + 2 \beps(\bu_h)\nabla\nu - \nabla p_h)\Vert_{0,T}^2\\
&=\int_{T}\psi_T\cP_{T}^{\ell}(\ff -\sigma \bu_h - \nu \curl \omega_h - (\bbeta\cdot\nabla)\bu_h + 2 \beps(\bu_h)\nabla\nu - \nabla p_h)\\
&\qquad\times(\ff -\sigma \bu_h - \nu \curl \omega_h - (\bbeta\cdot\nabla)\bu_h
+ 2 \beps(\bu_h)\nabla\nu - \nabla p_h),
\end{align*}
where we have used the fact that $\cP_{T}^{\ell}$ is the $L^2(T)^2$-orthogonal projection.
{Thus, from the above inequality, and \eqref{eq:momentum1} (cf. Remark~\ref{uniquesolu}), we can deduce that}
\begin{align*}
\Vert \cP_{T}^{\ell}(\ff &-\sigma \bu_h  - \nu \curl \omega_h - (\bbeta\cdot\nabla)\bu_h +  2 \beps(\bu_h)\nabla\nu - \nabla p_h)\Vert_{0,T}^2 \\
&{\leq}\int_{T}\psi_T\cP_{T}^{\ell}(\ff -\sigma \bu_h - \nu \curl \omega_h - (\bbeta\cdot\nabla)\bu_h + 2 \beps(\bu_h)\nabla\nu - \nabla p_h)\\
&\quad\times(\sigma (\bu-\bu_h) + \nu\curl (\omega-\omega_h) + (\bbeta\cdot\nabla)(\bu-\bu_h)
- 2 \beps(\bu-\bu_h)\nabla\nu + \nabla (p-p_h)).
\end{align*}
Next, using that the viscosity is a polynomial function,
the bound follows by integration by parts on the
terms $\curl (\omega-\omega_h)$ and $\nabla (p-p_h)$,
Cauchy-Schwarz inequality and an inverse inequality (cf. Lemma~\ref{inverse}).
We end the proof by observing that the required efficiency
bound follows straightforwardly from
the estimates above, and after assuming additional regularity for
 $\ff$.
\end{proof}

%%%%%%%%%%%%%%%%%%%%%%%%%%%%%%%%%%%%
\section{Numerical results}\label{sec:results}
%%%%%%%%%%%%%%%%%%%%%%%%%%%%%%%%%%%%

In this section, we present some numerical experiments carried out
with the schemes proposed and analysed in Section~\ref{FEM:section}.
We also present two numerical examples in $\mathbb{R}^2$,
confirming the reliability and efficiency of the a posteriori error estimator
$\Theta$ derived in Section~\ref{aposte}, and showing the behaviour
of the associated adaptive algorithm.
The solution of all linear systems is carried out with the multifrontal
massively parallel sparse direct solver MUMPS.

We construct a series of uniformly successively
refined triangular meshes for $\O$ and compute  individual 
errors 
\[e(\bu) =\nnorm{\bu-\bu_h}_{1,\O},\quad  
e(\bomega) =\|\bomega-\bomega_h\|_{0,\O},\quad  
e(p)=\|p-p_h\|_{0,\O},\]
and convergence rates
\begin{equation}\label{rates}
r(\bu)=\dfrac{\log(e(\bu)/\widehat{e}(\bu))}{\log(h/\hat{h})},
\qquad r(\bomega)=\dfrac{\log(e(\bomega)/\widehat{e}(\bomega))}{\log(h/\hat{h})},\qquad
r(p)=\dfrac{\log(e(p)/\widehat{e}(p))}{\log(h/\hat{h})},
\end{equation}
where $e,\widehat{e}$ denote errors generated on two consecutive
meshes of sizes $h,\hat{h}$, respectively.

\subsection{Example 1: Convergence test using manufactured solutions} 
The first test consists of approximating closed-form solutions 
on a two-dimensional domain $\Omega=(0,1)^2$.
We construct the forcing term $\ff$ so that the exact
solution to \eqref{eq:momentum1}-\eqref{eq:mass1}
is given by the following smooth functions
\begin{gather*}
p(x,y):=\left(\left(x-\dfrac{1}{2}\right)^3y^2+(1-x)^3\left(y-\dfrac{1}{2}\right)^3 \right),\\
\bu(x,y):=\curl(1000x^2(1-x)^4y^3(1-y)^2), \qquad 
 \omega(x,y):=\rot \bu,\end{gather*}
which satisfy the incompressibility constraint as well as the
boundary conditions. In addition, we take
$\bbeta =\bu,$ 
and two specifications for the variable viscosity are considered, 
\[\nu_{a}(x,y)=\nu_0+(\nu_1-\nu_0)xy, \  
\nu_{b}(x,y)=\nu_0+(\nu_1-\nu_0)exp(-10^{13}((x-0{\text.}5)^{10}+(y-0{\text.}5)^{10})),\]
with $\nu_0=0{\text .}001$, $\nu_1=1$, and taking 
$\kappa_1=\frac{2}{3}\nu_0$, $\kappa_2=\frac{\nu_0}{2}$ and $\sigma=100$.
The error history of the method introduced
in Section~\ref{tayhood} with discontinuous
finite elements for vorticity $(\bW_h^2)$ for $k=1$
and for the two different viscosity functions
is collected in Tables~\ref{ta1}
and \ref{ta2}, respectively.
These values indicate optimal accuracy $O(h^2)$ for $k=1$, and
for $\nu_a$ and $\nu_b$, according to Theorem~\ref{teoTaylorHood}.

\begin{table}[!t]
\begin{center}
{\small\begin{tabular}{@{}cccccccc@{}}
\toprule
$h$&$\nnorm{\bu-\bu_h}_{1,\O}$& $r(\bu)$ &$\Vert \omega-\omega_h\Vert_{0,\O}$
&$r(\omega)$& $\Vert p-p_h\Vert_{0,\O}$ & $r(p)$\\ \midrule
0.7071 & 10.86 &    -- & 9.1110 &    -- & 2.5470 &   --\\
0.3536 & 4.4240 & 1.3 & 3.5500 & 1.4 & 1.5330 & 0.7\\
0.1768 & 1.2540 & 1.8 & 0.9854 & 1.9 & 0.3493 & 2.1\\
0.0883 & 0.3492 & 1.8 & 0.2470 & 2.0 & 0.0622 & 2.4\\
0.0441 & 0.1096 & 1.7 & 0.0613 & 2.0 & 0.0107 & 2.5\\
0.0221 & 0.0327 & 1.8 & 0.0151 & 2.0 & 0.0020 & 2.4\\
0.0110 & 0.0075 & 2.1 & 0.0037 & 2.0 & 0.0004 & 2.2\\ \bottomrule
\end{tabular}}
\caption{Example~1: convergence tests against analytical solutions 
on a sequence of uniformly refined triangulations of the domain 
$\O$ and the viscosity function $\nu_a$.}
\label{ta1}
\end{center}
\end{table} 
 
 \begin{table}[!t]
\begin{center}
{\small\begin{tabular}{@{}ccccccc@{}}
\toprule
$h$&$\nnorm{\bu-\bu_h}_{1,\O}$& $r(\bu)$ &$\Vert \omega-\omega_h\Vert_{0,\O}$
&$r(\omega)$& $\Vert p-p_h\Vert_{0,\O}$ & $r(p)$\\ \midrule
0.7071 & 10.91 &   -- & 9.1340 &   -- & 2.1190 &   --\\
0.3536 & 4.489 & 1.3 & 3.6710 & 1.3 & 1.4580 & 0.5\\
0.1768 & 1.367 & 1.7 & 1.1200 & 1.7 & 0.2789 & 2.4\\
0.0883 & 0.366 & 1.9 & 0.2951 & 1.9 & 0.0482 & 2.5\\
0.0441 & 0.113 & 1.7 & 0.0864 & 1.8 & 0.0070 & 2.8\\
0.0221 & 0.036 & 1.6 & 0.0220 & 2.0 & 0.0014 & 2.3\\
0.0110 & 0.007 & 2.1 & 0.0046 & 2.2 & 0.0003 & 2.2\\ \bottomrule
\end{tabular}}
\caption{Example~1: convergence tests against analytical solutions 
on a sequence of uniformly refined triangulations of the domain 
$\O$ and the viscosity function $\nu_b$.}
\label{ta2}
\end{center}
\end{table}

\subsection{Example 2: Convergence in 3D} 
The aim of this numerical test is to assess the accuracy 
of the method in the 3D case. With this end,
we consider the domain $\O:=(0,1)^3$
and take $\ff$ so that the
exact solution is given by
\begin{gather*}p(x,y,z):=1-x^2-y^2-z^2,\quad 
\varphi(x,y,z):=
x^2(1-x)^2y^2(1-y)^2z^2(1-z)^2,\\
\bu(x,y,z)=\curl \varphi,\qquad 
\bomega(x,y,z)=\curl\bu,\end{gather*}
and we   consider 
$\bbeta=\bu$, and $\nu_c(x,y,z)=\nu_0+(\nu_1-\nu_0)x^{2}y^{2}z^{2}$.
The remaining constants are $\nu_0=0{\text .}1$, $\nu_1=1$, $\kappa_1=\frac{2}{3}\nu_0$,
$\kappa_2=\frac{\nu_0}{2}$, and $\sigma=1000$.
We observe that the hypothesis of Lemma~\ref{lem-elip} are satisfied.  Additionally, we  employ  finite elements
with $k=1$, {that is,
$\bV_h$ approximating the velocity,} and piecewise
linear and continuous elements for vorticity and pressure.

In Table~\ref{taexam2}, we summarise the convergence history
for a sequence of uniform meshes. For velocity we observe the $O(h)$ convergence 
predicted by Theorem~\ref{teoMINI}, whereas the approximation of 
vorticity and pressure seem to be superconvergent. 
Figure~\ref{vpvfigures3D} 
displays velocity and vorticity streamlines as well as 
the approximate pressure distribution.

\begin{table}{\!t}
\begin{center}
{\small\begin{tabular}{@{}ccccccc@{}}
\toprule
$h$&$\nnorm{\bu-\bu_h}_{1,\O}$& $r(\bu)$ &$\Vert \bomega-\bomega_h\Vert_{0,\O}$
&$r(\bomega)$& $\Vert p-p_h\Vert_{0,\O}$ & $r(p)$\\ \midrule
0.866  & 0.01021 &  -- & 0.00299 &  -- & 0.04732 &  -- \\
0.433  & 0.00858 & 0.3 & 0.00125 &  1.3 & 0.01399 &  1.8 \\
0.288  & 0.00665 & 0.6 & 0.00067 &  1.5 & 0.00572 &  2.2 \\
0.216  & 0.00513 &  0.9 & 0.00043 &  1.5 & 0.00290 &  2.4 \\
0.173  & 0.00398 &  1.1 & 0.00030 &  1.5 & 0.00171 &  2.4 \\
0.144  & 0.00313 &  1.3 & 0.00023 &  1.5 & 0.00112 &  2.3 \\
0.123  & 0.00251 &  1.3 & 0.00018 &  1.5 & 0.00079 &  2.2 \\ \bottomrule
\end{tabular}}
\caption{Example~2: experimental convergence using homogeneous
Dirichlet boundary conditions on a 3D domain $\O$
and using the viscosity function $\nu_c$.}
\label{taexam2}
\end{center}
\end{table}

\begin{figure}[!t]
\begin{center}
\includegraphics[height=0.35\textwidth]{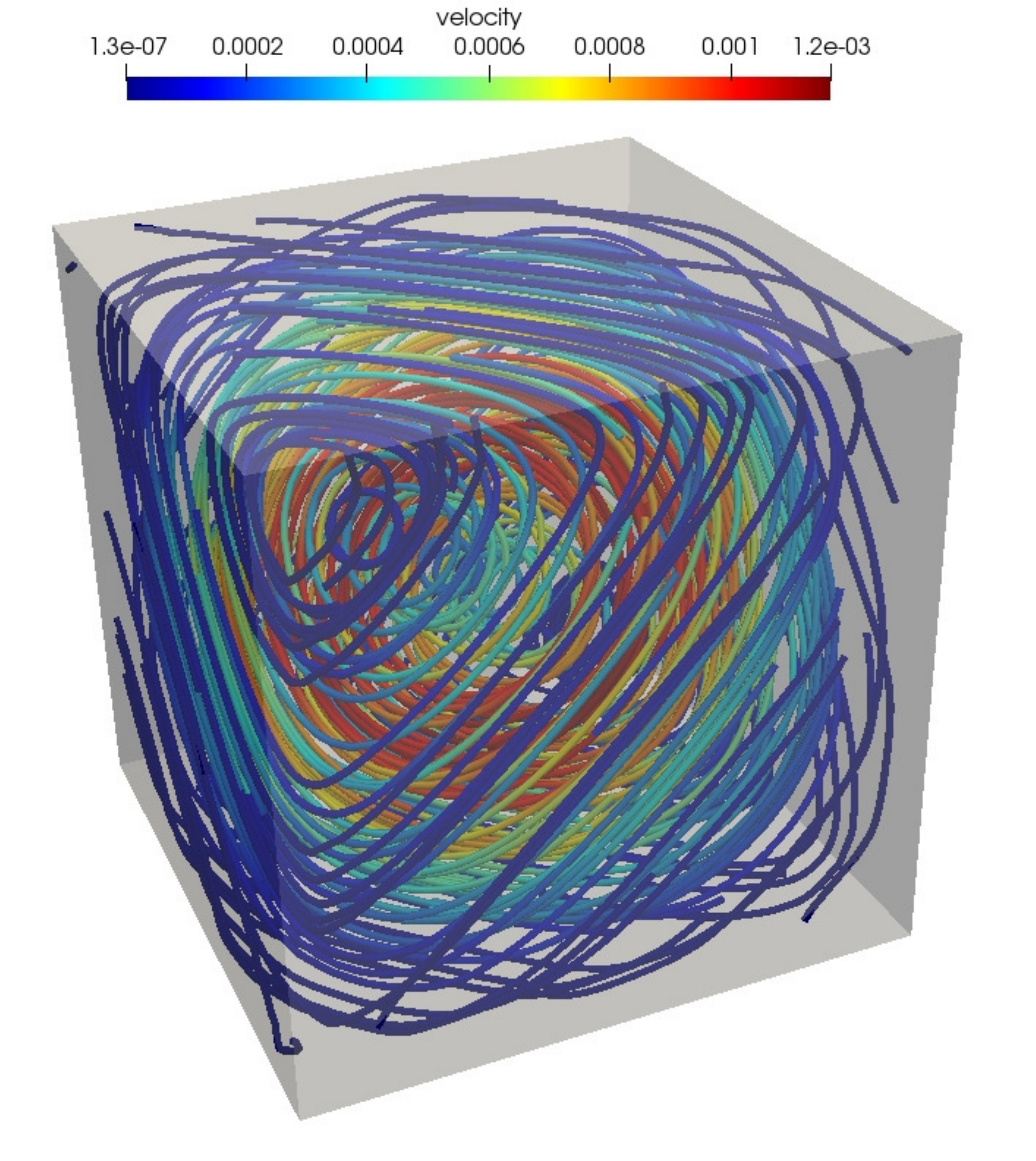}
\includegraphics[height=0.35\textwidth]{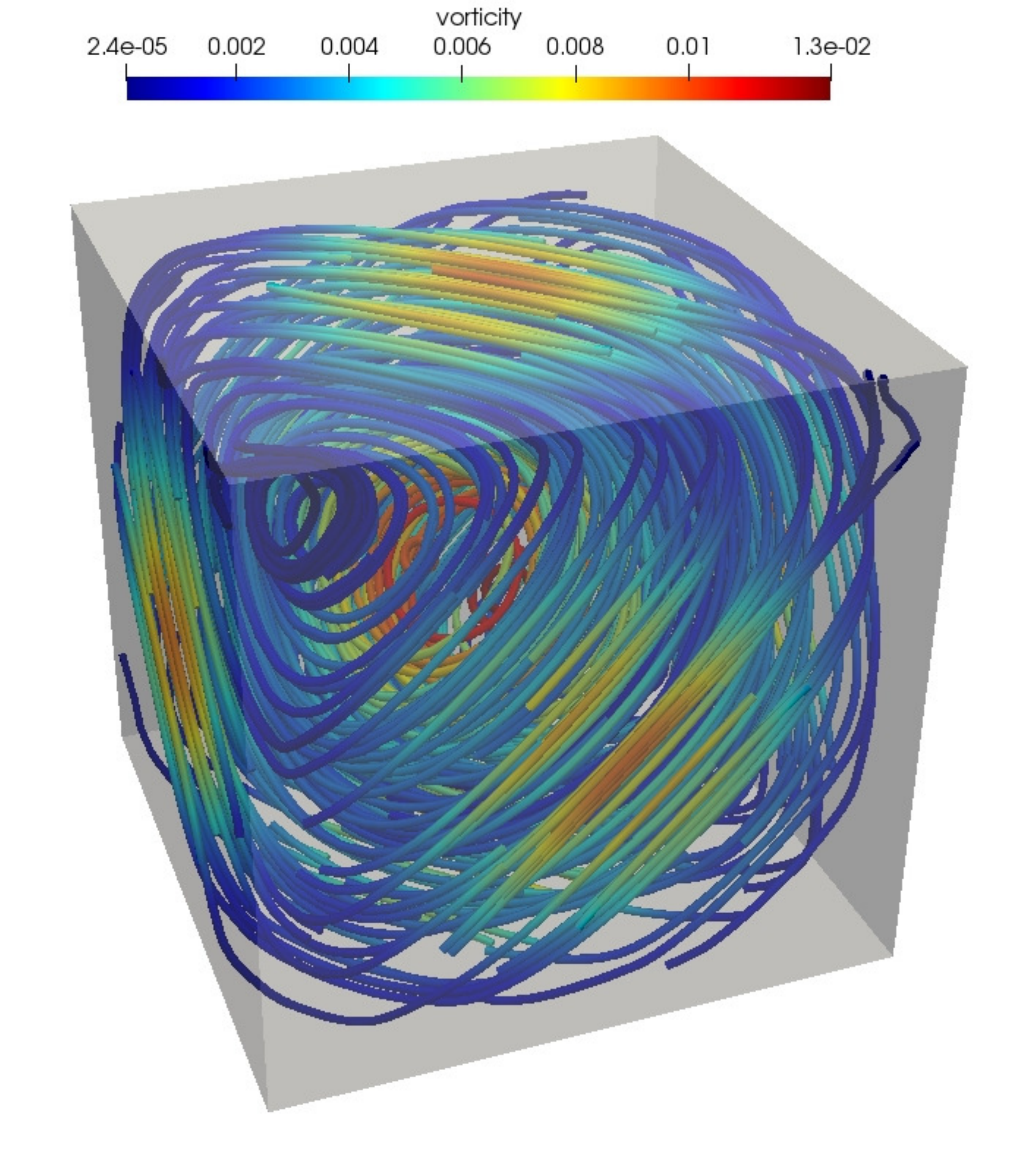}
\includegraphics[height=0.35\textwidth]{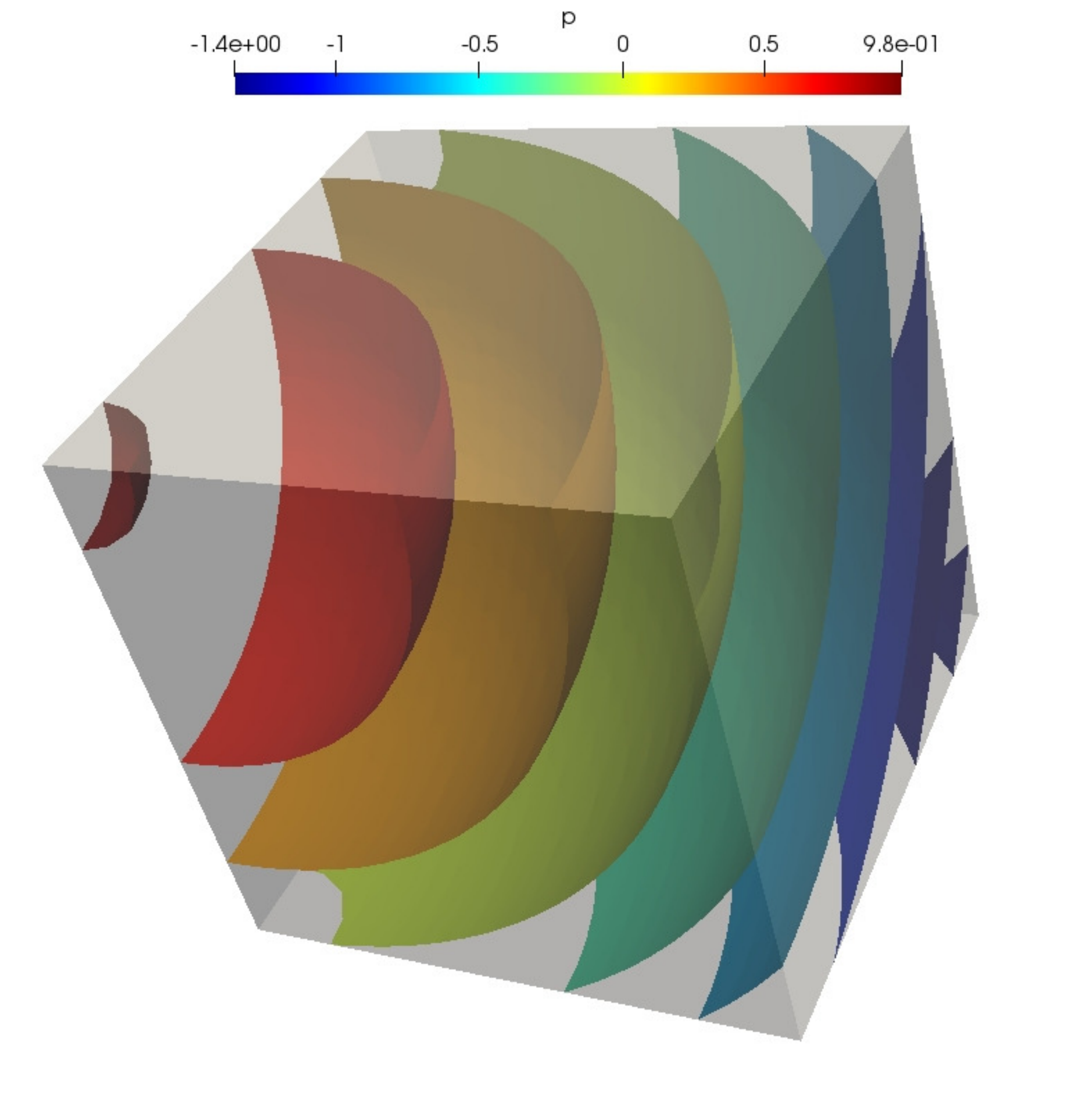}
\caption{Example~2: Approximate solutions computed using the MINI-element. Velocity streamlines (left) vorticity streamlines (centre) and 
pressure distribution (right).}
\label{vpvfigures3D}
\end{center}
\end{figure}

\subsection{Example 3: A posteriori error estimates and adaptive mesh refinement}
In this numerical test, we test the efficiency of the a posteriori error
estimator \eqref{globalestimator} and applying mesh refinement according
to the local value of the indicator.
In this case, the convergence rates are obtained by replacing the
expression $\log(h/\hat{h})$ appearing in the computation
of \eqref{rates} by $-\frac{1}{2}\log(N/\hat{N})$, where $N$ and $\hat{N}$
denote the corresponding degrees of freedom of each triangulation.

\begin{figure}[!t]
\begin{center}
\includegraphics[width=0.24\textwidth]{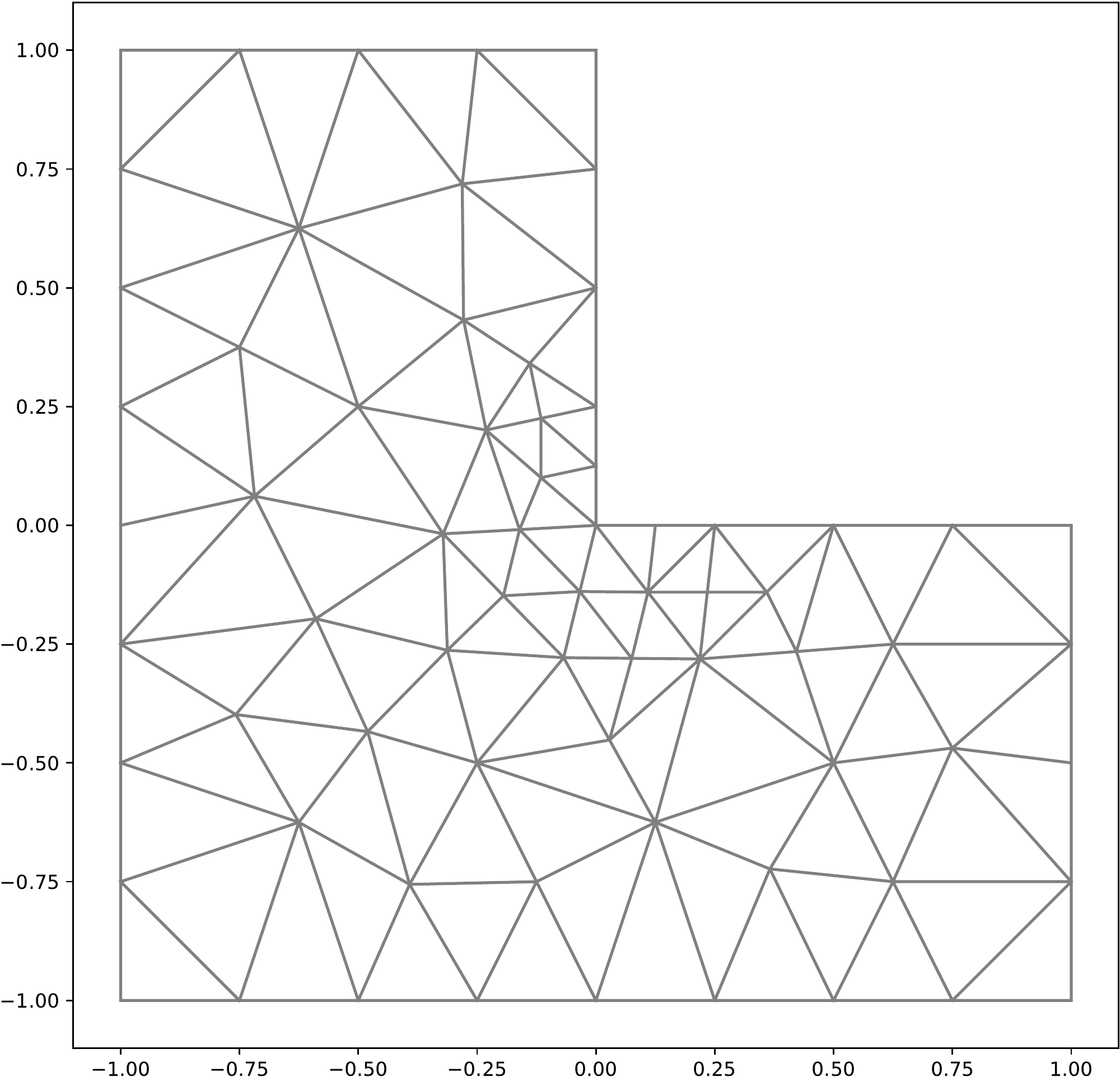}
\includegraphics[width=0.24\textwidth]{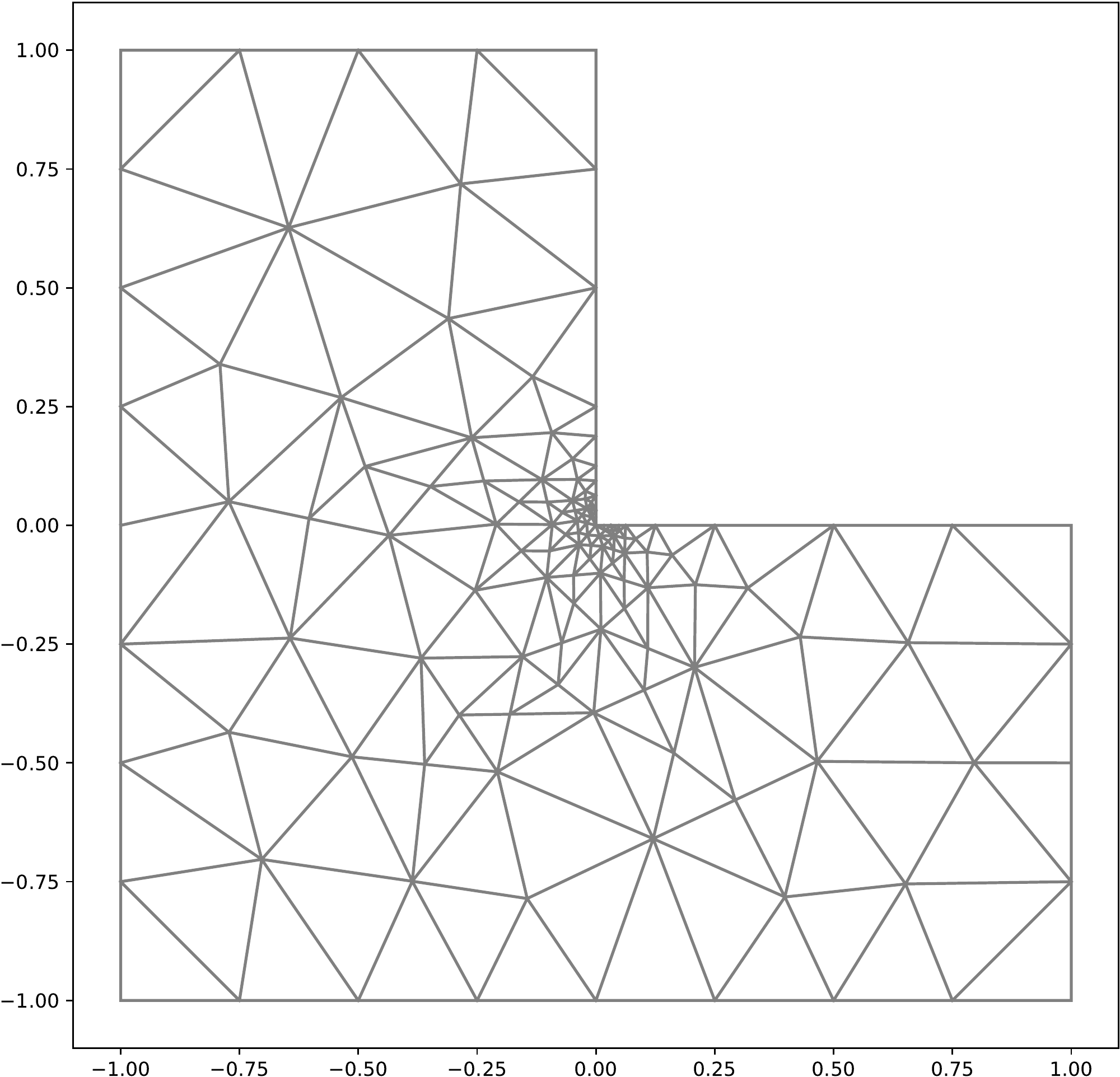}
\includegraphics[width=0.24\textwidth]{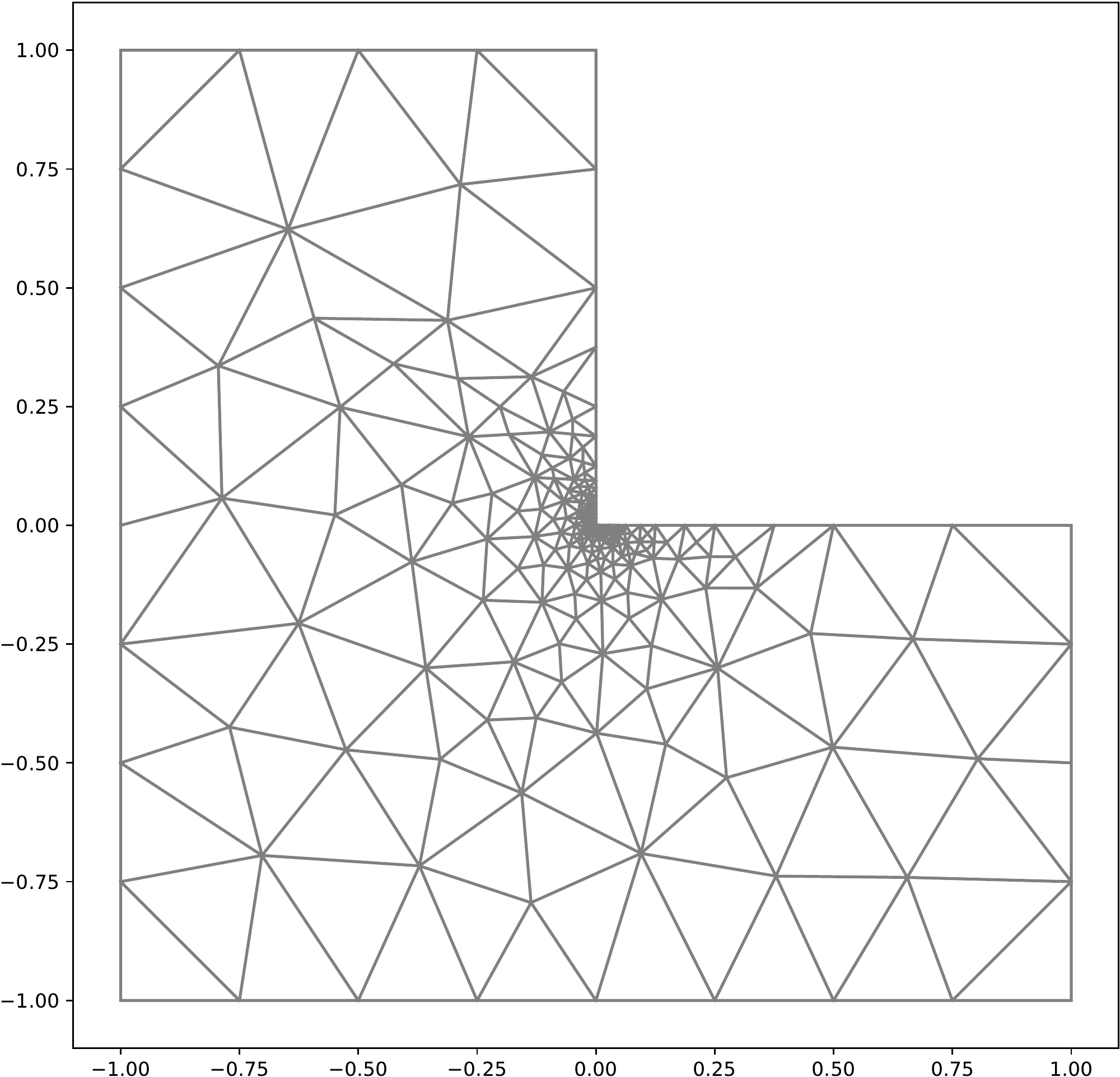}
\includegraphics[width=0.24\textwidth]{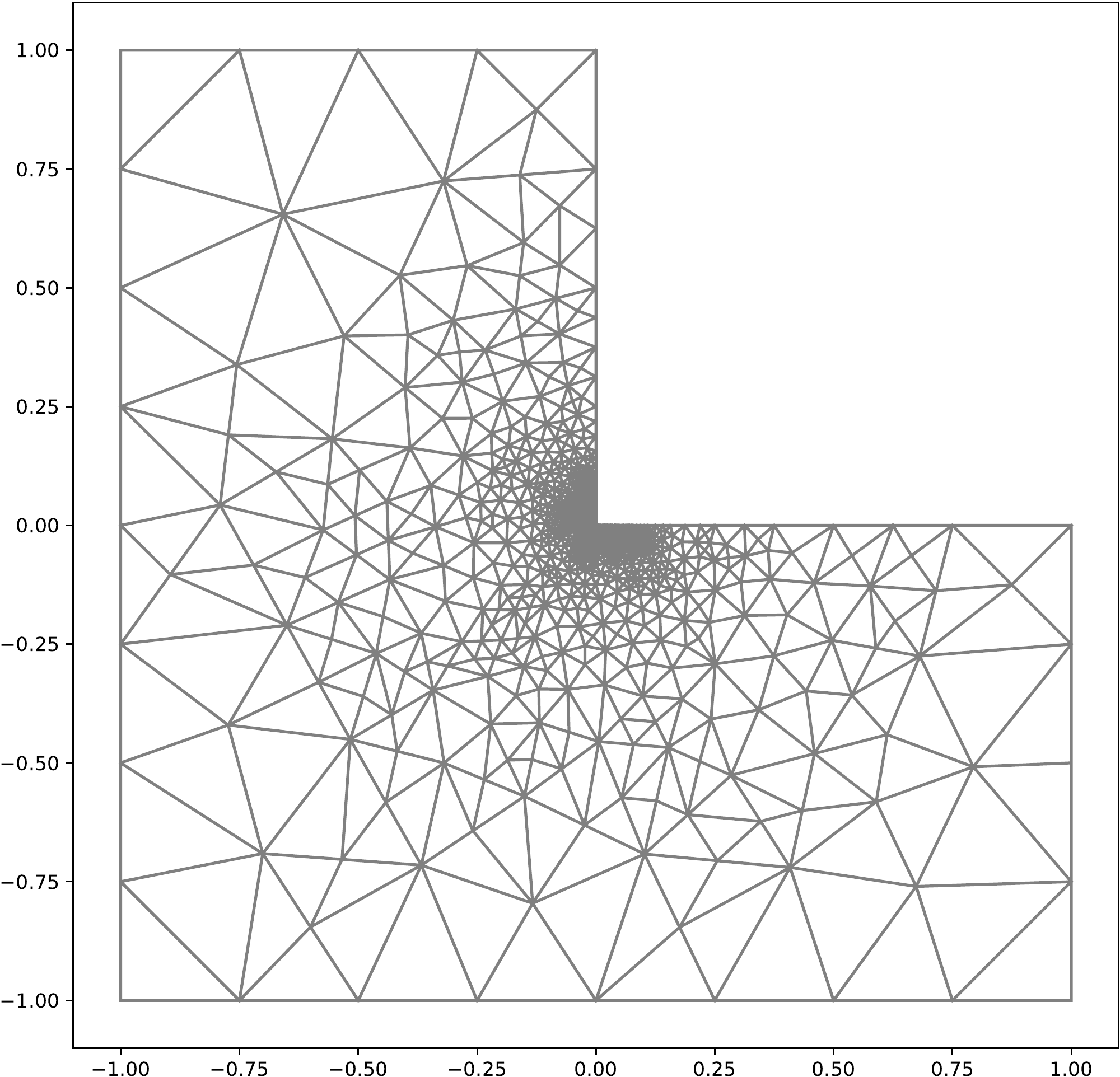}
\caption{Example~3: Snapshots of four grids, $\mathcal{T}_h^1$,
$\mathcal{T}_h^4$, $\mathcal{T}_h^6$, 
$\mathcal{T}_h^{10}$, adaptively refined according
to the a posteriori error indicator defined in \eqref{globalestimator}.}
\label{fig:adaptmesh}
\end{center}
\end{figure}

Now, we recall the definition of the so-called effectivity index
as the ratio between the total error and the global error estimator, i.e.,
\[{\tt e}(\bu,\omega,p):= \Big\{[e(\bu)]^2+[e(\omega)]^2+[e(p)]^2\Big\}^{1/2}, 
\qquad {\tt eff}(\Theta):= \frac{{\tt e}(\bu,\omega,p)}{\Theta}.\]

We will employ the family of finite elements
introduced in Section~\ref{tayhood} for $k=1$,
namely piecewise quadratic and continuous elements
for velocity and piecewise
linear and continuous elements for 
vorticity and pressure fields.

The computational domain is the nonconvex L-shaped
domain $\O=(-1,1)^2\setminus(0,1)^2$, where 
problem \eqref{eq:momentum1}-\eqref{eq:mass1} admits the following exact solution 
\begin{gather*}
p(x,y):=\dfrac{1-x^2-y^2}{(x-0.025)^2+(y-0.025)^2}-12.742942014/3,\\
\varphi(x,y)=x^2(1-x)^2y^2(1-y)^2\exp(-50((x-0.025)^2+(y-0.025)^2)),\quad 
\bu=\curl\varphi, \quad 
\omega=\rot\bu,\end{gather*}
which satisfy the incompressibility constraint as well as the
boundary conditions. Convective velocity, viscosity, and other parameters are taken as 
\begin{gather*} \bbeta=\bu,\quad   
\nu_d(x,y)=\nu_0+\dfrac{721}{16}(\nu_1-\nu_0)x^2(1-x)y^2(1-y), \quad 
\nu_0 = 0.1, \quad \nu_1 = 1,\\
\nu_e(x,y)=\nu_0+(\nu_1-\nu_0)\exp(-10^{12}((x-0.5)^{10}+(y-0.5)^{10})), 
\quad \kappa_1=\frac{2}{3}\nu_0, \quad \kappa_2=\frac{\nu_0}{2}, \quad \sigma = 10.\end{gather*}
Pressure is singular near the reentrant corner
of the domain and so we expect hindered convergence of the
approximations when a uniform (or quasi-uniform) mesh refinement is applied.
In contrast, if we apply the following adaptive mesh refinement procedure from \cite{verfuerth96}:
\begin{enumerate}
\item[1)]
Start with a coarse mesh $\mathcal{T}_h$.
\item[2)]
Solve the discrete problem \eqref{probform2d} for the current mesh
$\mathcal{T}_h$.
\item[3)]
Compute $\Theta_T:=\Theta$ for each triangle
$T\,\in\,\mathcal{T}_h$.
\item[4)]
Check the stopping criterion and decide whether to finish or go to
next step.
\item[5)]
Use {\it blue-green} refinement on those $T'\in\mathcal{T}_h$ whose
indicator $\Theta_{T'}$ satisfies
\[
\Theta_{T'} \,\ge\,\frac{1}{2}\,\max_{T\in \cT_h} \left\{
\Theta_{T}:\,\, T\in\mathcal{T}_h\,\right\}\,.
\]
\item[6)]
Define resulting meshes as current meshes $\mathcal{T}_h$ and
$\mathcal T_h$, and go to step 2,
\end{enumerate}
we expect a recovering of the optimal convergence rates.
In fact, this can be observed from the bottom rows
of Tables~\ref{taposter1} and \ref{taposter2}, for both
 $\nu_d$ and $\nu_e$, respectively.
Moreover,  the efficiency indexes are around 1
for both viscosities.
The resulting meshes after a few adaptation steps
are reported in Figure~\ref{fig:adaptmesh}, showing the expected 
refinement near the reentrant corner.

\begin{table}[!t]
\begin{center}
{\small\begin{tabular}{@{}rcrcrcrcc@{}}
\toprule
$N$&$\nnorm{\bu-\bu_h}_{1,\O}$& $r(\bu)$ &$\Vert \omega-\omega_h\Vert_{0,\O}$
&$r(\omega)$& $\Vert p-p_h\Vert_{0,\O}$ & $r(p)$&${\tt eff}(\Theta)$\\ \midrule
%381 &  54.13 &   -- & 10.85 &   -- & 32.21 &   -- &    4.040    \\
%489 &  132.2 & -7.15 & 31.81 & -8.62 & 35.11 & -0.68 & 2.198      \\
%571 &  144.9 & -1.18 & 41.43 & -3.40 & 22.06 & 5.99 &  1.059 \\
%661 &  49.68 & 14.6 & 8.821 & 21.1 & 6.685 & 16.3 & 1.133   \\
661 &  49.68 & -- & 8.821 & -- & 6.685 & -- & 1.133   \\
999 &  32.37 & 2.07 & 5.069 & 2.68 & 3.985 & 2.50 &    1.157  \\
1241 & 15.46 & 6.81 & 2.104 & 8.10 & 1.846 & 7.09 &    1.144 \\
1881 & 9.058 & 2.57 & 1.396 & 1.97 & 1.057 & 2.68 &    1.098  \\
2103 & 7.178 & 4.17 & 0.907 & 7.72 & 0.828 & 4.36 &  1.135 \\
2621 & 5.645 & 2.18 & 0.754 & 1.67 & 0.655 & 2.12 &  1.120  \\
3851 & 3.647 & 2.27 & 0.454 & 2.63 & 0.418 & 2.33 &   1.168\\
4267 & 3.243 & 2.29 & 0.401 & 2.46 & 0.365 & 2.61 &  1.156\\
5271 & 2.687 & 1.77 & 0.298 & 2.76 & 0.294 & 2.03 &  1.143\\
7819 &  1.754 & 2.16 & 0.194 & 2.18 & 0.191 & 2.22 & 1.155\\ \bottomrule
\end{tabular}}
\caption{Example~3: Convergence history and 
effectivity indexes for the 
method introduced in Section~\ref{tayhood},
computed on a sequence of adaptively refined 
triangulations of the L-shaped domain and using viscosity $\nu_d$.}\label{taposter1}
\end{center}
\end{table}

 \begin{table}[!t]
\begin{center}
{\small\begin{tabular}{@{}rcrcrcrcc@{}}
\toprule
$N$&$\nnorm{\bu-\bu_h}_{1,\O}$& $r(\bu)$ &$\Vert \omega-\omega_h\Vert_{0,\O}$
&$r(\omega)$& $\Vert p-p_h\Vert_{0,\O}$ & $r(p)$&${\tt eff}(\Theta)$\\ \midrule
%381  & 59.77 &  -- & 13.42 &   -- & 32.31 &   -- & 3.715\\
%489  & 133.9 & -6.46 & 33.24 & -7.27 & 35.03 & -0.64 & 2.166\\
%571  & 145.2 & -1.04 & 41.84 & -2.97 & 22.05 & 5.97 & 1.054\\
661  & 49.73 & -- & 8.842 & -- & 6.681 & -- & 1.132\\
999  & 32.39 & 2.07& 5.081 & 2.68 & 3.980 & 2.50 & 1.155\\
1241  & 15.50 & 6.79 & 2.122 & 8.05 & 1.838 & 7.12 & 1.138\\
1881  & 9.087 & 2.56 & 1.401 & 1.99 & 1.039 & 2.74 & 1.085\\
2103  & 7.213 & 4.14 & 0.914 & 7.65 & 0.806 & 4.55 & 1.114\\
2589  & 5.683 & 2.29 & 0.759 & 1.78 & 0.633 & 2.32 &  1.112\\
3771  & 3.734 & 2.23 & 0.461 & 2.64 & 0.406 & 2.35 & 1.113\\
5161  & 2.674 & 2.12 & 0.307 & 2.58 & 0.287 & 2.20 & 1.108\\
6867  & 1.946 & 2.22 & 0.207 & 2.77 & 0.205 & 2.36 & 1.116\\
9887  & 1.346 & 2.02 & 0.128 & 2.60 & 0.138 & 2.16 & 1.119\\ \bottomrule
\end{tabular}}
\caption{Example~3: Convergence history and 
effectivity indexes for the 
 method introduced in Section~\ref{tayhood},
computed on a sequence of adaptively refined 
triangulations of the L-shaped domain and using viscosity $\nu_e$.}\label{taposter2}
\end{center}
\end{table}

\subsection{Example 4: Steady blood flow in aortic arch} 
We finalise the set of examples with a simple simulation of pseudo-stationary blood flow in an aorta. The patient-specific geometry \cite{marchandise11,mer1} has one inlet (a segment that connects with the pre-aortic root coming from the aortic valve in the heart) and four outlets (the left common carotid artery, the left subclavian artery, the innominate artery, and the larger descending aorta). On the inlet we impose a Poiseuille profile of magnitude 4, on the vessel walls we set no-slip conditions, and on the remaining boundaries we set zero normal stresses (more physiologically relevant boundary conditions can be considered following, e.g., \cite{dob,formaggia}). The initial unstructured mesh has 46352 tetrahedral elements. 
The synthetic variable viscosity field is a smooth exponential function $\nu = \nu_0 + (\nu_1-\nu_0)\exp(-10^3[(x-0.1)^6+y-0.5)^6+(z-0.5)^6])$ with $\nu_0 = 10^{-3},\nu_1=10$ that entails an average Reynolds number of approximately 60 (computed using the inlet diameter and maximal inlet velocity), while the convecting velocity is computed as the solution of a preliminary Stokes problem (on the initial coarse mesh), and we prescribe $\sigma = 1000$ and $\ff=\sigma\bbeta$. Then we compute numerical solutions of the Oseen problem and 
 apply four steps of adaptive mesh refinement using a 3D version of the estimator \eqref{globalestimator} and the algorithm 
 described in the previous example. 
The results are portrayed in Figure~\ref{fig:aorta}, plotting pressure distribution, velocity streamlines, vorticity, and a sample of the resulting adaptive mesh which shows more refinement near the boundaries of the descending aorta. For this test we have used a conforming approximation of vorticity. 

 \begin{figure}[!t]
\begin{center}
\includegraphics[width=0.325\textwidth]{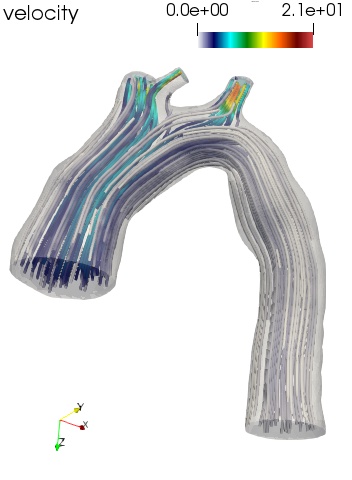}
\includegraphics[width=0.325\textwidth]{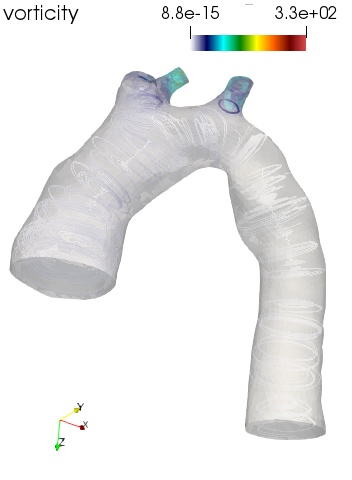}
\includegraphics[width=0.325\textwidth]{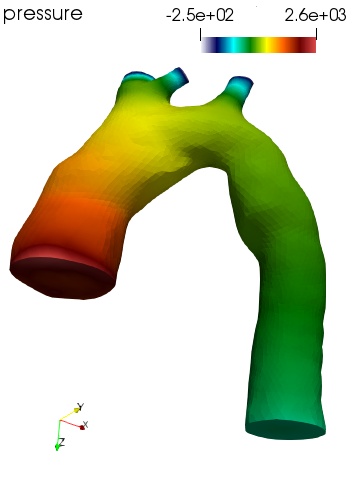}\\
\includegraphics[width=0.325\textwidth]{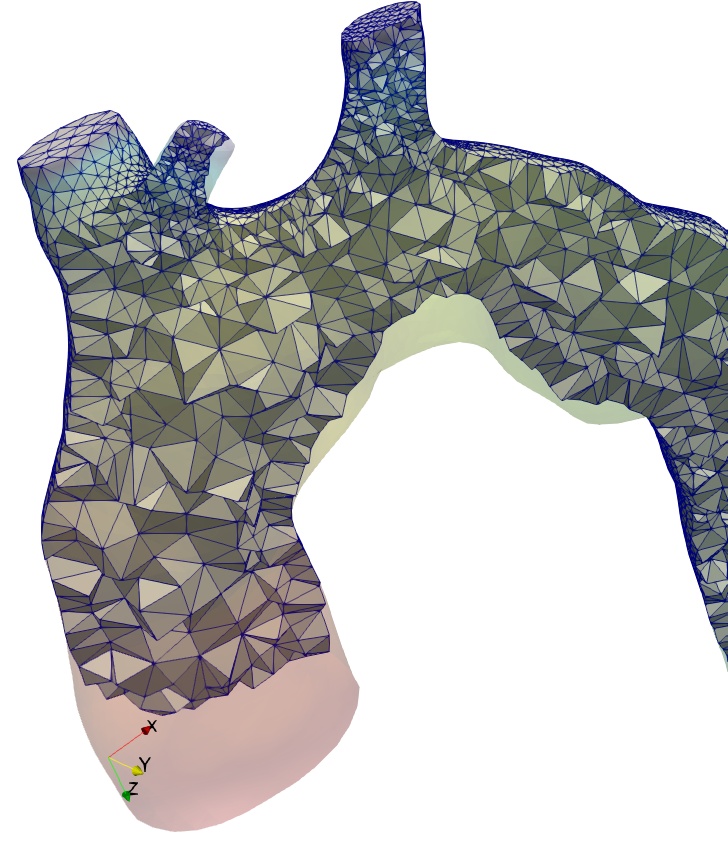}
\includegraphics[width=0.325\textwidth]{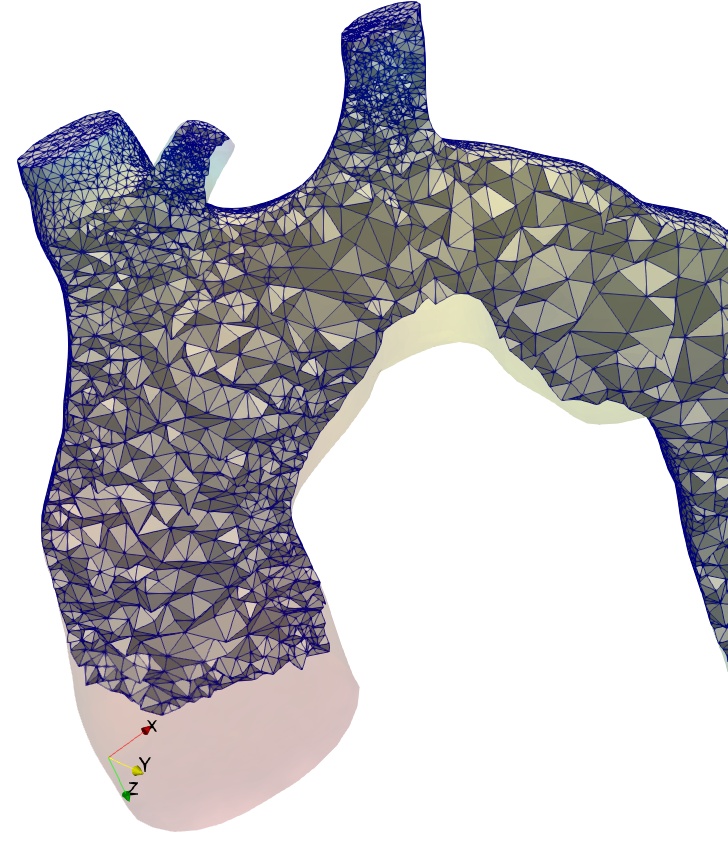}
\includegraphics[width=0.325\textwidth]{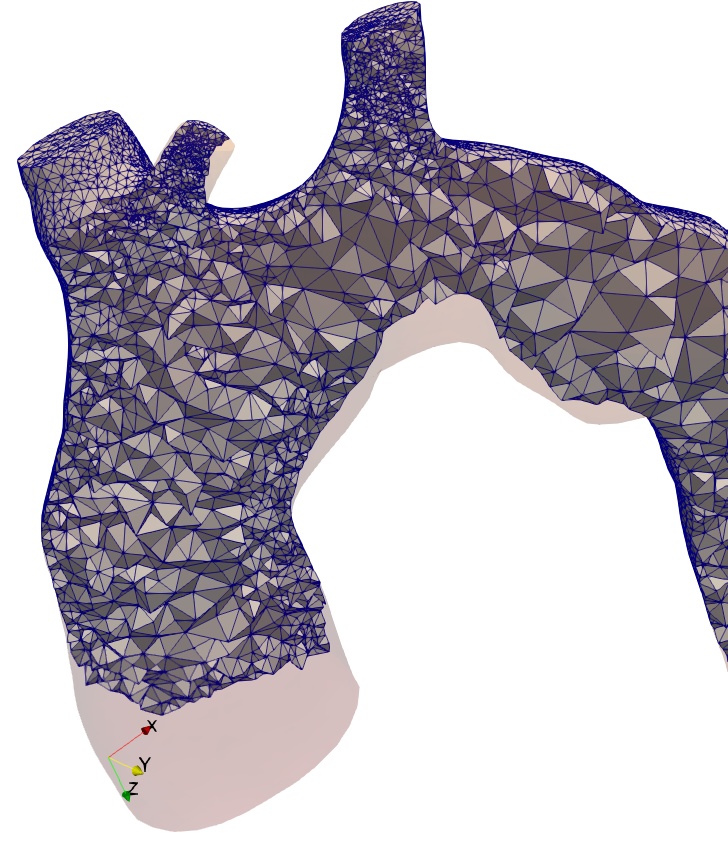}
\caption{Example~4: Simulation of stationary blood flow in an aortic arch. Approximate velocity, vorticity, and pressure (top panels), and 
samples of adaptive mesh after one, two and three refinement steps, and visualising a cut focusing on the boundaries (bottom row).}
\label{fig:aorta}
\end{center}
\end{figure}

%%%%%%%%%%%%%%%%%%%%%%%%%%%%%%%%%%%%

\end{document}